\newtheorem{theorem}{Theorem}
\newtheorem{lemma}[theorem]{Lemma}
\newtheorem{prop}[theorem]{Proposition}
\theoremstyle{definition}
\newtheorem{defin}[theorem]{Definition}
\newtheorem{fact}[theorem]{Fact}
\theoremstyle{remark}
\newtheorem*{rem}{Remark}
\renewcommand{\phi}{\varphi}
\newcommand{\aut}{\mathrm{Aut}}
\newcommand{\lip}{\mathrm{Lip}}
\begin{document}

\title{Minimal model-universal flows for locally compact Polish groups}
\author{Colin Jahel and Andy Zucker}

\maketitle
\begin{abstract}
Let $G$ be a locally compact Polish group. A metrizable $G$-flow $Y$ is called model-universal if by considering the various invariant probability measures on $Y$, we can recover every free action of $G$ on a standard Lebesgue space up to isomorphism. Weiss has shown that for countable $G$, there exists a minimal, model-universal flow. In this paper, we extend this result to all locally compact Polish groups.
\let\thefootnote\relax\footnote{2020 Mathematics Subject Classification. Primary: 37B05. Secondary: 22D40, 28D15.}
\let\thefootnote\relax\footnote{A.\ Zucker was supported by NSF Grant no.\ DMS 1803489. C.\ Jahel and A.\ Zucker were partially supported by the ANR project AGRUME (ANR-17-CE40-0026).}
\end{abstract}

\section{Introduction}

Topological dynamics can be understood as an attempt to describe the actions of topological groups on compact spaces. Part of this process consists of understanding which objects are generic or universal among the actions of a given group. A famous instance of this is the universal minimal flow, whose existence was proven by Ellis (\cite{Ellis}). This is a minimal flow which maps onto any other minimal flow; by understanding the properties of this one object, we can better understand the collection of all minimal flows. In this paper, we prove the existence of a minimal flow which is universal in a different sense, in that it contains a copy of any measured free action. Similarly, this ``universal minimal model" can help shed light on the dynamical properties of a given locally compact group.

Let $G$ be a locally compact non-compact Polish group. A \emph{$G$-flow} is a continuous $G$-action on a compact space. A $G$-flow is said to be \emph{minimal} if every orbit is dense. If $Y$ is a $G$-flow, then $P_G(Y)$ denotes the $G$-invariant regular Borel probability measures on $Y$.

By a \emph{$G$-system}, we will mean a Borel $G$-action on a standard Lebesgue space $(X, \mu)$ which preserves $\mu$. We say that a $G$-system $(X, \mu)$ is \emph{free} if the set 
$$\mathrm{Free}(X):= \{x\in X: \forall g\in (G\setminus \{1_G\})\,\, gx\neq x\}$$
has measure $1$. Because $G$ is locally compact, this set is Borel. Therefore when dealing with free $G$-systems, we will often just assume that $X = \mathrm{Free}(X)$.  

If $(X, \mu)$ and $(Y, \nu)$ are $G$-systems, we say that $Y$ is a \emph{factor} of $(X, \mu)$ if there is a Borel, $G$-invariant subset $X'\subseteq X$ with $\mu(X') = 1$ and a Borel, $G$-equivariant map $f\colon X'\to Y$ with $\nu = f_*\mu$. 

If we can find $f$ as above which is also injective, then we say that $(X, \mu)$ and $(Y, \nu)$ are \emph{isomorphic} $G$-systems.  We will denote this $(X, \mu)\cong (Y, \nu)$.

A compact metric $G$-flow $Y$ is \emph{weakly model-universal} if for every free $G$-system $(X, \mu)$, there is $\nu\in P_G(Y)$ with $(Y, \nu)$ a factor of $(X, \mu)$. We say that $Y$ is \emph{model-universal} if for every free $G$-system $(X, \mu)$, there is $\nu\in P_G(Y)$ with $(X, \mu)\cong (Y, \nu)$.

The main theorem of this paper is the construction of a minimal, model-universal flow for every locally compact, non-compact Polish group.
\vspace{2 mm}
 
\begin{theorem}\label{Thm:Princip}
Let $G$ be a locally compact, non-compact Polish group. Then there exists a minimal model-universal flow for $G$.
\end{theorem}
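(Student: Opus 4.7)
The plan is to prove Theorem \ref{Thm:Princip} in two stages: first, construct a (possibly non-minimal) model-universal $G$-flow $Y_0$; then, extract from $Y_0$ a minimal subflow $Y$ that remains model-universal via an inverse-limit construction.

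For the first stage, I would use a ``name'' coding into a space of equicontinuous functions. Fix a compatible proper right-invariant metric on $G$ (which exists since $G$ is locally compact Polish), a compact metric space $K$, and a Lipschitz constant $L>0$, and let $Y_0 := \mathrm{Lip}_L(G,K)$ with the topology of uniform convergence on compacta and the shift $G$-action; by Arzelà--Ascoli this is a compact metric $G$-flow. Given a free $G$-system $(X,\mu)$, realized (via Varadarajan / Becker--Kechris) as a Borel action on a Polish space, a sufficiently generic Lipschitz coding function $f \colon X \to K$ will make the name map $\phi_f \colon x \mapsto (g \mapsto f(gx))$ $\mu$-almost everywhere injective and landing in $Y_0$. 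The pushforward $(\phi_f)_* \mu \in P_G(Y_0)$ then gives an isomorphic copy of $(X,\mu)$ inside $Y_0$, so $Y_0$ is model-universal.

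For the second stage, I would construct $Y$ as an inverse limit $Y = \varprojlim Y_n$ of minimal $G$-flows $\cdots \to Y_{n+1} \to Y_n \to \cdots$ with equivariant factor maps; inverse limits of minimal flows are automatically minimal. The tower is built so that $Y_n$ captures the coding information of every free $G$-system up to resolution $1/n$. The passage from $Y_n$ to $Y_{n+1}$ uses a Rokhlin-type lemma for locally compact Polish groups --- the Ornstein--Weiss tiling lemma when $G$ is amenable, and a Borel cross-section argument reducing to Weiss's theorem for countable groups in the general case --- to graft a Toeplitz-like periodic combinatorial structure onto the coding, forcing every orbit closure in $Y_{n+1}$ to be minimal while still allowing any free system's coding to lift through the tower.

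The main obstacle is ensuring that the limit flow $Y$ is \emph{model-universal} rather than merely weakly model-universal: the coding of each free $G$-system must be shown to be $\mu$-a.e.\ injective in the limit, not just a factor map. The plan is a diagonalization across a countable dense family of Lipschitz coding functions $\{f_j\}$ and a suitable parametrization of free $G$-systems, arranging at stage $n$ that the coding into $Y_n$ already distinguishes points up to precision $1/n$ using finitely many $f_j$, coherently with the coding into $Y_{n-1}$. Verifying that such coherent refinement is compatible with the minimality-imposing Toeplitz constraints, uniformly across all free systems, is the central technical challenge and where the bulk of the argument will reside.
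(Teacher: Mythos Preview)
Your first stage is essentially what the paper does in Section~3: it shows $\lip(G,[0,1])^\omega$ is model-universal by a convolution-smoothing variant of the name map you describe. So that part is fine.

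The second stage, however, diverges from the paper in a way that leaves genuine gaps. The paper does \emph{not} build an inverse limit of minimal flows along factor maps. Instead it constructs a sequence of (non-minimal) subflows $Y_n\subseteq \lip^n$, each only $(D_n,2^{-n})$-minimal, via an explicit ``uniformizing'' operation $Y_{n+1}=\Theta(Y_n\times\lip,u_n,F_n)$ that inserts a fixed pattern $u_n$ along a $G_{F_n}$-spaced set; the limit $Z$ is taken in the \emph{Vietoris} topology on subflows of $\lip^\omega$, and minimality of $Z$ is deduced from the approximate minimality of the $Y_n$. Model-universality of $Z$ is then obtained by a separate argument: one exhibits a Borel $G$-equivariant injection from a full-measure subset of the model-universal product $\lip^\omega\times\prod_n G_{F_n}$ into $Z$, using a Borel--Cantelli estimate to show that for almost every sequence of spaced sets the uniformizing modifications eventually miss any fixed compact window.

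Two specific problems with your plan: first, your proposed route for non-amenable $G$ --- ``a Borel cross-section argument reducing to Weiss's theorem for countable groups'' --- does not work as stated. A lacunary cross-section gives a countable Borel equivalence relation on $X$, but there is no canonical countable group acting, and even if there were, a minimal model-universal flow for that group says nothing about $G$-minimality or $G$-model-universality. The paper uses cross-sections (Kechris/Slutsky) only to show that each $G_{F_n}$ is \emph{weakly} model-universal, which is a much more modest input. Second, insisting that each $Y_n$ already be minimal with factor maps $Y_{n+1}\to Y_n$ makes the coding problem harder, not easier: you would need to produce, for every free system, a coherent tower of factor maps into minimal flows, and your diagonalization sketch does not indicate how to reconcile this with the Toeplitz constraints. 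The paper sidesteps this entirely by decoupling the two issues --- approximate minimality is built into the $Y_n$, while injectivity is handled once at the end by the map $\phi\colon W\to Z$.
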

\vspace{2 mm}

This result extends a result of Weiss \cite{W}, who proved Theorem~\ref{Thm:Princip} in the case of countable groups. Weiss uses the slightly different terminology \emph{universal minimal model} to describe this object. More recently, Zucker \cite{Z} gives a new proof of Weiss's result, and it is this proof that we generalize. We remark that Theorem~\ref{Thm:Princip} cannot extend to all Polish groups. As an example, the group $\aut(\mathbb{Q})$ admits no non-trivial minimal flows, while the shift action on $[0,1]^\mathbb{Q}$ equipped with the product Lebesgue measure is a free $G$-system. It would be interesting to understand the precise class of Polish groups for which Theorem~\ref{Thm:Princip} is true.

The universality of the flow from Theorem~\ref{Thm:Princip} is quite different from that of the universal minimal flow $M(G)$. Indeed, $M(G)$ is universal in the sense that it surjects onto every minimal flow, while we construct a minimal flow into which every free $G$-system can be injected. Another difference is uniqueness; while $M(G)$ is unique up to isomorphism, it is shown in \cite{Z} that when $G$ is countable, there are continuum many minimal model-universal flows up to isomorphism. Unfortunately, the proof of this requires some machinery for countable groups that is not yet known to generalize to locally compact groups. However, we strongly suspect that for any locally compact $G$, minimal model-universal flows are not unique.

Another result of \cite{Z} that we do not address here is whether a minimal, model-universal flow can be free. If $Y$ is a minimal flow and $\nu\in P_G(Y)$ is such that $(Y, \nu)$ is a free $G$-system, then $Y$ must be \emph{essentially free}, meaning that $\mathrm{Free}(Y)\subseteq Y$ is dense $G_\delta$. However, Weiss's construction of a minimal model-universal flow left open the question of whether such an object could be free. The first construction of a free minimal model-universal flow was given by Elek \cite{Elek}, and in \cite{Z}, an easy method of transforming any minimal model-universal flow into a free one is provided. The method is roughly as follows: start with $Z$ a minimal model-universal flow, where we note that the construction from \cite{Z} gives a zero-dimensional flow. Then construct an \emph{almost one-one} extension $\pi\colon Y\to Z$ with $Y$ free and so that for every $z\in \mathrm{Free}(Z)$, $|\pi^{-1}(\{z\})| = 1$. Then $\pi\colon \pi^{-1}(\mathrm{Free}(Z))\to \mathrm{Free}(Z)$ is a $G$-equivariant homeomorphism, and thus the map $\pi^{-1}\colon \mathrm{Free}(Z)\to Y$ will show that $Y$ is also model-universal. Unfortunately, it is essential that $Z$ be zero-dimensional for this to work, and our construction here does not produce zero-dimensional flows (indeed this is impossible when $G$ is connected). A simple example is provided by Antonyan \cite{Ant}; let $G = \mathbb{Z}/2\mathbb{Z}$ and set $Z = [-1, 1]^\omega$. $G$ acts on $Z$ by negating every coordinate. Hence $\mathrm{Free}(Z) = Z\setminus \{0\}$, where $0 = (0,0,0,...)$. However, Antonyan shows that $Z\setminus \{0\}$ does not embed as a $G$-subspace of any free $G$-flow. Therefore any soft method of transforming a minimal model-universal flow into a free one must use a different method to work for all locally compact groups.

One immediate consequence of Theorem~\ref{Thm:Princip} is a result that was suggested by Angel, Kechris and Lyons in \cite{AKL}. Recall that a topological group $G$ is said to be \emph{uniquely ergodic} if all minimal $G$-flows admit exactly one $G$-invariant probability measure.
\vspace{2 mm}

\begin{theorem}\label{Thm:nonUE}
Let $G$ be a locally compact non-compact Polish group.  Then there is a minimal $G$-flow with multiple invariant probability measures. In particular, $G$ is not uniquely ergodic. 
\end{theorem}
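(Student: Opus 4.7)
My plan is to apply Theorem~\ref{Thm:Princip} to obtain a minimal, model-universal $G$-flow $Y$, and then to exhibit two non-isomorphic free $G$-systems; model-universality will force these to correspond to distinct elements of $P_G(Y)$. In more detail, if $(X_1,\mu_1)$ and $(X_2,\mu_2)$ are free $G$-systems which are not isomorphic, then by model-universality there exist $\nu_1,\nu_2\in P_G(Y)$ with $(Y,\nu_i)\cong (X_i,\mu_i)$ for $i=1,2$. Were $\nu_1=\nu_2$, we would obtain $(X_1,\mu_1)\cong(Y,\nu_1)=(Y,\nu_2)\cong(X_2,\mu_2)$, a contradiction. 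Hence $|P_G(Y)|\geq 2$, and as $Y$ is already a minimal flow, $G$ is not uniquely ergodic.

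The remaining task will be to produce two non-isomorphic free $G$-systems. The first will be a free ergodic measure-preserving action $(X,\mu)$ of $G$ on a standard Lebesgue space, which exists for any locally compact non-compact Polish group: one can, for instance, use the Gaussian action associated to the left regular representation of $G$ (which has no non-zero fixed vectors, giving freeness, and is weakly mixing for non-compact $G$, yielding ergodicity), or the Poisson suspension over the translation action of $G$ on $(G,\mathrm{Haar})$. The second system will be $(X\times\{0,1\},\mu\times\lambda)$, with $\lambda$ uniform on $\{0,1\}$ and $G$ acting trivially on the second coordinate. This is again a free $G$-system on a standard Lebesgue space, since freeness only depends on the first coordinate. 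However, $X\times\{0\}$ is a $G$-invariant Borel set of measure $\tfrac{1}{2}$, so this second system is non-ergodic, while $(X,\mu)$ is ergodic. Since ergodicity is an isomorphism invariant, the two systems are non-isomorphic.

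The main, and only non-routine, point in the argument is the existence of the free ergodic probability-measure-preserving action $(X,\mu)$ for arbitrary locally compact non-compact Polish $G$. This is a classical fact, and depending on how self-contained the paper aims to be, one might wish to include a reference, or a brief sketch of, say, the Poisson suspension construction (together with the standard verification that non-compactness of $G$ makes a generic configuration locally finite but infinite, whence the translation action is free and ergodic). Once this ingredient is in hand, Theorem~\ref{Thm:nonUE} follows immediately from the short chain of implications above applied to the flow $Y$ supplied by Theorem~\ref{Thm:Princip}.
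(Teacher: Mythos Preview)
Your argument is correct and very close to the paper's own proof: both invoke Theorem~\ref{Thm:Princip} to get a minimal model-universal flow and both use the doubled system $(X\times 2,\mu\times\delta_{1/2})$ with $G$ acting trivially on the second factor. The only real difference is in how the conclusion ``$|P_G(Y)|\geq 2$'' is extracted. You produce two non-isomorphic free systems---the ergodic $(X,\mu)$ and the non-ergodic $(X\times 2,\mu\times\delta_{1/2})$---and argue that model-universality forces the corresponding $\nu_1,\nu_2$ to be distinct. The paper instead observes that the single measure $\nu$ realizing $(X\times 2,\mu\times\delta_{1/2})$ is itself non-ergodic, hence not an extreme point of $P_G(Y)$, so $P_G(Y)$ cannot be a singleton.

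The practical upshot is that the paper's route needs only the existence of \emph{some} free $G$-system (for which it cites \cite{AEG}), whereas your route needs a free \emph{ergodic} one. That is still a standard fact (Gaussian or Poisson constructions work, as you note), but it is a strictly stronger input, and without ergodicity your non-isomorphism claim could fail: an arbitrary free $(X,\mu)$ may well be isomorphic to its own doubling. So the paper's argument is marginally more economical, while yours is perhaps more explicit about why two distinct measures arise.
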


\begin{proof}
Let $Z$ be a minimal model-universal flow for $G$. It suffices to show that $Z$ admits an invariant probability measure that is not ergodic. Take $(X, \mu)$ a free $G$-system (see \cite{AEG}, Proposition 1.2). Then $G$ acts on $X\times 2$ by acting on the first coordinate. Letting $\delta_{1/2}$ be the $(1/2, 1/2)$-measure on $2$, then $(X\times 2, \mu\times \delta_{1/2})$ is a free, non-ergodic $G$-system. So letting $\nu\in P_G(Z)$ be chosen so that $(Z, \nu)\cong (X\times 2, \mu\times \delta_{1/2})$, we see that $\nu$ is not ergodic.
\end{proof}

\section{Preliminaries}
Fix a non-compact, locally compact Polish group $G$, on which we fix a right-invariant compatible metric $d$. Throughout we assume that $\{g\in G: d(g, 1_G)\leq 1\}$ is compact.
\vspace{2 mm}

\begin{defin}
	\label{Def:Lip}
	If $(X, d_X)$ and $(Y, d_Y)$ are compact metric spaces, $\lip(Y, X)$ denotes the space of Lipschitz functions from $Y$ to $X$ with the topology of pointwise convergence. If $f\in \lip(G, X)$ and $g, h\in G$, we set
	$$(g\cdot f)(h) = f(hg).$$
	This action turns $\lip(G, X)$ into a $G$-flow. Given a subflow $Y\subseteq \lip(G, X)$ and any $B\subseteq G$, we set $$S_B(Y) := \{y|_B: y\in Y\}.$$
	Notice that $S_B(Y)\subseteq \lip(B, X)$ is a compact metric space; when $B\subseteq G$ is pre-compact, we will use the uniform metric 
	$$d_B(u, v):= \sup\{d_X(u(g), v(g)): g\in B\}.$$
	If $u\in S_B(Y)$ and $g\in G$, we define $g\cdot u \in S_{Bg^{-1}}(Y)$ via $(g\cdot u)(h) = u(hg)$. If $A\subseteq G$ is another subset, we set 
	$$A|B := \{g\in G: A\subseteq Bg^{-1}\}.$$
	In particular, if $g\in A|B$ and $u\in S_B(Y)$, then $(g\cdot u)|_A\in S_A(Y)$.
\end{defin}

\begin{rem}
	If $(X, d_X)$ and $(Y, d_Y)$ are metric spaces, we will equip $X\times Y$ with the metric $d_{X\times Y}((x_0, y_0), (x_1, y_1)) = \max\{d_X(x_0, x_1), d_Y(y_0, y_1)\}$. 
\end{rem}
\vspace{2 mm}
	
We now spend some time understanding minimality in subflows of $\lip(G, X)$.
\vspace{2 mm} 
	
\begin{defin}
	\label{Def:MinimalFragment}
	Let $(X, d_X)$ be a compact metric space. If $\epsilon > 0$ and $S\subseteq X$, we say that $S\subseteq X$ is \emph{$\epsilon$-dense} if for every $x\in X$, there is $s\in S$ with $d_X(x, s)< \epsilon$. 
		
	Now fix a pre-compact $D\subseteq G$ and $\epsilon > 0$. We say that a subflow $Y\subseteq \lip(G, X)$ is \emph{$(D, \epsilon)$-minimal} if for any $y\in Y$, we have that $\{(g\cdot y)|_D: g\in G\}\subseteq S_D(Y)$ is $\epsilon$-dense. 
\end{defin} 
\vspace{2 mm}
	
We remark that this notion is monotone; if $Y$ is $(D, \epsilon)$-minimal, then also $Y$ is $(D_0, \epsilon_0)$-minimal for any $D_0\subseteq D$ and $\epsilon_0\geq \epsilon$.
\vspace{2 mm}
	
\begin{prop}
	\label{Prop:MinimalFragment}
	With notation as in Definition~\ref{Def:MinimalFragment}, the following are equivalent.
	\begin{enumerate}
		\item 
		The subflow $Y\subseteq \lip(G, X)$ is $(D, \epsilon)$-minimal.
		\item 
		There is a pre-compact open $E\subseteq G$ so that for any $u\in S_E(Y)$, we have that \newline $\{(g\cdot u)|_D: g\in D|E\}\subseteq S_D(Y)$ is $\epsilon$-dense. 
	\end{enumerate}
\end{prop}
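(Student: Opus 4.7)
The plan is to prove the equivalence in two directions, with $(2)\Rightarrow(1)$ being essentially a packaging statement and $(1)\Rightarrow(2)$ requiring a compactness-exhaustion argument.

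For $(2)\Rightarrow(1)$: given $y\in Y$, I set $u:=y|_E\in S_E(Y)$. The key identity is that for any $g\in D|E$ (so $Dg\subseteq E$) and any $h\in D$,
\[(g\cdot u)(h) = u(hg) = y(hg) = (g\cdot y)(h),\]
so $(g\cdot u)|_D = (g\cdot y)|_D$. Hence $\{(g\cdot u)|_D : g\in D|E\}\subseteq \{(g\cdot y)|_D : g\in G\}$, and the $\epsilon$-density given by (2) immediately transfers to (1).

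For $(1)\Rightarrow(2)$: I argue by contradiction. Since $G$ is locally compact Polish, fix a pre-compact open exhaustion $E_1\subseteq E_2\subseteq\cdots$ with $\bigcup_n E_n = G$. If (2) fails, then for each $n$ there exist $u_n\in S_{E_n}(Y)$ and $w_n\in S_D(Y)$ with $d_D((g\cdot u_n)|_D, w_n)\geq \epsilon$ for every $g\in D|E_n$. Extend each $u_n$ to $y_n\in Y$. Using compactness of $Y$ (pointwise) and of $S_D(Y)$ (in $d_D$), pass to a subsequence so that $y_n\to y$ pointwise in $Y$ and $w_n\to w$ in $(S_D(Y), d_D)$. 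Apply (1) to $y$ to pick $g\in G$ with $d_D((g\cdot y)|_D, w)< \epsilon/2$.

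The final step combines these limits to produce a contradiction. Because $Dg$ is pre-compact, eventually $Dg\subseteq E_n$, so $g\in D|E_n$ and the identity from the first part gives $(g\cdot u_n)|_D = (g\cdot y_n)|_D$. Since $y_n$ and $y$ all lie in $\lip(G,X)$ with a common Lipschitz bound, the family $\{(g\cdot y_n)|_D\}$ is equicontinuous on the pre-compact set $D$, and pointwise convergence $y_n\to y$ upgrades via Arzelà--Ascoli to uniform convergence $(g\cdot y_n)|_D \to (g\cdot y)|_D$ in the $d_D$ metric. The triangle inequality then yields $d_D((g\cdot u_n)|_D, w_n)<\epsilon$ for all sufficiently large $n$, contradicting the defining property of the sequences. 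The main delicacy is precisely this passage from pointwise to uniform convergence, resting on the Lipschitz equicontinuity built into the ambient space $\lip(G,X)$; without it the argument collapses.
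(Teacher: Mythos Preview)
Your argument is essentially the same as the paper's: both directions match, with the paper writing the $(1)\Rightarrow(2)$ direction as the contrapositive and leaving the upgrade from pointwise to uniform convergence implicit, while you spell out the Arzel\`a--Ascoli step. One small slip: in applying (1) you claim $d_D((g\cdot y)|_D, w)< \epsilon/2$, but $(D,\epsilon)$-minimality only yields the strict inequality $<\epsilon$; this is still enough, since the strict gap absorbs the two vanishing terms in your triangle inequality, so the contradiction goes through unchanged.
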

	
\begin{proof}
	Item $2$ certainly implies item $1$. Conversely, if Item $2$ fails, let $E_n\subseteq G$ be an exhaustion of $G$ by pre-compact open sets, and find $u_n\in S_{E_n}(Y)$ and $v_n\in S_D(Y)$ with $d_D((g\cdot u_n)|_D, v_n)\geq \epsilon$ for every $g\in D|E_n$. Passing to a subsequence, let $u_n\to y\in Y$ and $v_n\to v\in S_D(Y)$. Towards a contradiction, suppose there were $g\in G$ with $d_D((g\cdot y)|_D, v)< \epsilon$. Then for some $N< \omega$, we have $g\in D|E_n$ for every $n\geq N$. But then we must have $d_D((g\cdot u_n)|_D, v_n)< \epsilon$ for some $n$, a contradiction. Hence $Y$ is not $(D, \epsilon)$-minimal.  
\end{proof}
\vspace{2 mm}

\begin{prop}
	\label{Prop:MinimalGDelta}
	With notation as in Definition~\ref{Def:MinimalFragment}, the subflow $Y\subseteq \lip(G, X)$ is minimal iff it is $(D, \epsilon)$-minimal for every pre-compact $D\subseteq G$ and $\epsilon > 0$. 
\end{prop}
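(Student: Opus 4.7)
I will prove the two directions separately; the real work is in the forward implication, which requires upgrading pointwise convergence to uniform convergence on pre-compact subsets of $G$.

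The reverse direction $(\Leftarrow)$ is essentially a matter of unpacking definitions. Fixing $y, y' \in Y$, I must show $y' \in \overline{G \cdot y}$. A basic neighborhood of $y'$ in the pointwise topology has the form $\{z \in \lip(G,X) : d_X(z(h), y'(h)) < \epsilon \text{ for all } h \in F\}$ for some finite $F \subseteq G$ and $\epsilon > 0$. Choose a pre-compact $D \supseteq F$; by $(D,\epsilon)$-minimality applied to $v := y'|_D \in S_D(Y)$, there is $g \in G$ with $d_D((g\cdot y)|_D, y'|_D) < \epsilon$, so in particular $d_X((g\cdot y)(h), y'(h)) < \epsilon$ for every $h \in F$ and $g \cdot y$ lies in the chosen neighborhood. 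Hence $G \cdot y$ is dense and $Y$ is minimal.

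For the forward direction $(\Rightarrow)$, fix a pre-compact $D \subseteq G$, $\epsilon > 0$, $y \in Y$, and an arbitrary $v = y'|_D \in S_D(Y)$. By minimality there is a sequence $g_n \in G$ with $g_n \cdot y \to y'$ in $Y$, i.e.\ pointwise. The crucial observation is that the right-invariance of $d$ on $G$ makes the entire orbit $G \cdot y$ uniformly Lipschitz with the same constant as $y$ itself: if $y$ is $L$-Lipschitz, then
\[
d_X((g\cdot y)(h_1), (g\cdot y)(h_2)) = d_X(y(h_1 g), y(h_2 g)) \leq L \cdot d(h_1 g, h_2 g) = L \cdot d(h_1, h_2),
\]
so every $g \cdot y$ is also $L$-Lipschitz. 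Hence $\{g_n \cdot y\}$ is equicontinuous on the compact set $\overline{D}$, and pointwise convergence on a compact set combined with equicontinuity yields uniform convergence. This gives $d_D((g_n \cdot y)|_D, v) \to 0$; for large $n$ the distance drops below $\epsilon$, establishing $(D,\epsilon)$-minimality.

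The main obstacle is precisely the mismatch between the two topologies in play: $\lip(G,X)$ carries the topology of pointwise convergence, whereas $(D,\epsilon)$-minimality is phrased in terms of the uniform metric $d_D$ on $S_D(Y)$. The right-invariance of $d$, assumed at the start of Section 2, is exactly the tool that bridges this gap by forcing every orbit to be equicontinuous on every compact subset of $G$; without it, pointwise convergence in the ambient flow could leave $d_D$-distances bounded away from zero, and the equivalence would fail.
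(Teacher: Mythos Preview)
Your proof is correct and follows essentially the same approach as the paper's. The only difference is that you make explicit the equicontinuity argument (via right-invariance of $d$) needed to pass from pointwise convergence $g_n\cdot y\to z$ to $d_D$-convergence of the restrictions, a step the paper records simply as ``It follows that we must have $(g_n\cdot y)|_D\to u$''; your reverse direction using finite sets and basic neighborhoods is a trivial variant of the paper's exhaustion argument.
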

	
\begin{proof}
	If $Y$ is minimal, then fix pre-compact $D\subseteq G$, $\epsilon > 0$, and $y\in Y$. If $u\in S_D(Y)$, find $z\in Y$ with $z|_D = u$. Find $g_n\in G$ with $g_n\cdot y\to z$. It follows that we must have $(g_n\cdot y)|_D \to u$, implying that $d_D((g_n\cdot y)|_D, u)< \epsilon$ for some $n< \omega$. Hence $Y$ is $(D, \epsilon)$-minimal.
		
	Now assume that $Y$ is $(D, \epsilon)$-minimal for every pre-compact $D\subseteq G$ and $\epsilon > 0$. Let $E_n\subseteq G$ be an exhaustion of $G$ by pre-compact open sets, and let $\epsilon_n\to 0$. Fix $y, z\in Y$. We can then find for each $n< \omega$ some $g_n\in G$ with $d_{E_n}((g_n\cdot y)|_{E_n}, z|_{E_n}) < \epsilon_n$. It follows that $g_n\cdot y\to z$, showing that $Y$ is minimal.
\end{proof}
\vspace{2 mm}

The remainder of this section looks at other important flows closely related to $\lip(G, X)$.
\vspace{2 mm}
	
\begin{defin}
	\label{Def:ClosedSetsG}
We let $2^G$ denote the space of closed subsets of $G$ with the Fell topology. If $B\subseteq G$, we set $\mathrm{Meets}(B) := \{F\in 2^G: F\cap B \neq \emptyset\}$ and $\mathrm{Avoids}(B) := \{F\in 2^G: F\cap B = \emptyset\}$. By considering the sets $\mathrm{Meets}(U)$ and $\mathrm{Avoids}(C)$ for $U\subseteq G$ open and $C\subseteq G$ compact, we obtain a sub-basis for the Fell topology. The action we take is not quite the obvious one: given $g\in G$ and $F\in 2^G$, we set $g\boldsymbol{\cdot} F = Fg^{-1}$. We do this for the following reason: letting $\iota\colon 2^G\to \lip(G, [0,1])$ denote the map $\iota(S)(g) = d(g, S)$, then $\iota$ is an injective $G$-map.
\end{defin}
\vspace{2 mm}

\begin{defin}
	\label{Def:MaxSpacedSets}
	Fix $D\subseteq G$ any symmetric subset. Then a set $S\subseteq G$ is \emph{$D$-spaced} if $Dg\cap Dh = \emptyset$ for any $g\neq h\in S$. Notice that $S\subseteq G$ is a maximal $D$-spaced set iff $S$ is $D$-spaced and $D^2S = G$. 
	
	Now suppose $D\subseteq G$ is a compact symmetric neighborhood of the identity. We let $G_D\subseteq 2^G$ denote the collection of $S\subseteq G$ which are $\mathrm{Int}(D)$-spaced and with $D^2S = G$. In particular, $G_D$ contains every maximal $D$-spaced set and every maximal $\mathrm{Int}(D)$-spaced set.
\end{defin}

\begin{prop}
	\label{Prop:GDWeak}
	For every $D\subseteq G$ a compact symmetric neighborhood of the identity, the $G$-flow $G_D$ is weakly model-universal. 
\end{prop}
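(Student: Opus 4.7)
The plan is, given any free $G$-system $(X,\mu)$, to construct a Borel $G$-equivariant map $\sigma\colon X\to 2^G$ whose image lies $\mu$-almost surely in $G_D$. The pushforward $\nu := \sigma_*\mu$ is then automatically a $G$-invariant probability measure on $G_D$, and after restricting $\sigma$ to the conull Borel set $\{x:\sigma(x)\in G_D\}$, the resulting map exhibits $(G_D,\nu)$ as a factor of $(X,\mu)$, which is exactly the content of weak model-universality.

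Write $U := \mathrm{Int}(D)^2$, a symmetric open neighborhood of $1_G$. The main ingredient is a Borel \emph{marker lemma} for free Borel actions of locally compact Polish groups: there is a Borel set $A\subseteq X$ which is maximally $U$-lacunary in each orbit, meaning $x\in A$ and $g\in U\setminus\{1_G\}$ force $gx\notin A$, while no $y\in Gx\setminus A$ can be adjoined to $A$ without breaking this. Such $A$ is produced by fixing a Borel linear order on $X$ and a countable open base for $G$ and making a greedy selection; freeness of the action and local compactness of $G$ are what keep each stage Borel. Given $A$, set $\sigma(x) := \{g\in G : gx\in A\}$.

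Using the convention $g\boldsymbol{\cdot} F = Fg^{-1}$ from Definition~\ref{Def:ClosedSetsG}, $G$-equivariance follows immediately from $\sigma(hx) = \sigma(x) h^{-1} = h\boldsymbol{\cdot}\sigma(x)$. Borelness of $\sigma$ into the Fell space $2^G$ reduces on the subbasis to showing that $\{x : \sigma(x)\cap K\neq \emptyset\}$ is Borel for each compact $K\subseteq G$, and this holds because this set is the projection along the compact fiber $K$ of the Borel set $\{(x,g)\in X\times K : gx\in A\}$. For $\mu$-a.e.\ $x$, one also verifies $\sigma(x)\in G_D$: lacunarity of $A$ forces distinct $g_1,g_2\in\sigma(x)$ to satisfy $g_2 g_1^{-1}\notin U = \mathrm{Int}(D)^2$, which (using the symmetry of $\mathrm{Int}(D)$) is exactly $\mathrm{Int}(D)g_1\cap\mathrm{Int}(D)g_2=\emptyset$; and maximality of $A$ gives $U\cdot\sigma(x) = G$, hence $D^2\sigma(x) = G$. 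The main obstacle is producing $A$ via the marker lemma; everything after that is routine bookkeeping.
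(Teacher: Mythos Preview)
Your approach is essentially the paper's: produce a Borel maximal lacunary set $A\subseteq X$ and send $x\mapsto\{g\in G:gx\in A\}$. The paper, however, does not attempt to prove the marker lemma by hand; it quotes Kechris (existence of a Borel $D$-lacunary cross-section for any standard Borel $G$-space) and Slutsky (any such cross-section extends to a Borel maximal one). Your one-line sketch --- fix a Borel linear order on $X$ and a countable open base for $G$ and greedily select --- is the standard argument for \emph{countable} groups, but it does not obviously go through for uncountable locally compact $G$: orbits are uncountable, and a transfinite greedy selection along a Borel order will not in general produce a Borel set without substantial further work. Since you yourself flag this as ``the main obstacle,'' the honest move is to invoke the Kechris and Slutsky theorems rather than sketch.

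There is also a smaller gap in your Borelness argument for $\sigma$. The projection to $X$ of a Borel subset of $X\times K$ is a priori only analytic; compactness of $K$ alone does not make projections of Borel sets Borel. What saves you here is lacunarity: for each $x$ the set $\{g\in K: gx\in A\}$ is finite (any two of its elements differ by a group element outside $U$, and $K$ is covered by finitely many translates of $U$), so the projection is finite-to-one and Lusin--Novikov applies. The paper makes exactly this countable-to-one observation.
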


\begin{proof}
This fact follows from a theorem of Kechris and a theorem of Slutsky. Suppose $G\times X\to X$ is a standard Borel $G$-space. A \emph{cross-section} is any Borel $C\subseteq X$ so that $G\cdot C = X$.  If $D\subseteq G$ is a compact neighborhood of the identity, we say that a cross-section $C$ is \emph{$D$-lacunary} if whenever $x\neq y\in C$, we have $Dx\cap Dy = \emptyset$. A \emph{maximal $D$-lacunary cross-section} is a $D$-lacunary cross-section $C$ such that for any $x\not\in C$, $C\cup\{x\}$ is not $D$-lacunary. Notice that if $C$ is a maximal $D$-lacunary cross-section, then $D^2C = X$.

Kechris in \cite{Kechris} proves the following.
\vspace{2 mm}

\begin{theorem}
	\label{Thm:KechrisLacunary}
	Let $G$ be a locally compact group and $D$ a compact neighborhood of the identity. Then any standard Borel $G$-space $X$ admits a $D$-lacunary cross section.
\end{theorem}
\vspace{2 mm}
 
The measurable version of this result is a classical result of Feldman, Hahn, and Moore \cite{FHM}; however, we appeal to the Borel version as we will need the following strengthening due to Slutsky (\cite{Slutsky}, Theorem 2.4).
\vspace{2 mm}

\begin{theorem} 
	\label{Thm:SlutskyMaxLacunary}
	Let $G$ be a locally compact group, and let $X$ be a standard Borel $G$-space. Then if $D$ is a compact neighborhood of the identity and $C\subseteq X$ is a $D$-lacunary cross-section of $X$, there is a maximal $D$-lacunary cross section $C'\supseteq C$.
\end{theorem}
\vspace{2 mm}

Now fix $(X, \mu)$ a free $G$-system. Applying Theorem~\ref{Thm:SlutskyMaxLacunary}, let $C\subseteq X$ be a maximal $D$-lacunary cross section. We define a Borel $G$-map $f: X\to G_D$ by setting 
$$f(x) = C_x:= \{g\in G: gx\in C\}.$$
As $C$ is a maximal $D$-lacunary cross-section, we have $C_x\in G_D$, and certainly $f$ is $G$-equivariant. To see that $f$ is Borel, suppose $B\subseteq G$ is Borel, and consider $\mathrm{Meets}(B)\subseteq 2^G$. Then $f(x)\in \mathrm{Meets}(B)$ iff $Bx\cap C\neq \emptyset$. The set $Y:= \{(g, x): g\in B \text{ and } gx\in C\}\subseteq G\times X$ is Borel, and $f^{-1}(\mathrm{Meets}(B)) = \pi_X[Y]$. We note that since $C$ is $D$-lacunary, the projection $\pi_X$ is countable-to-one on $Y$, showing that $f^{-1}(\mathrm{Meets}(B))$ is Borel.
\end{proof}

\section{A (non minimal) model-universal flow}

In this section, we prove
\vspace{2 mm}

\begin{theorem} \label{Thm:BetterVarad}
	The $G$-flow $\lip(G, [0,1])^\omega$ is model-universal.
\end{theorem}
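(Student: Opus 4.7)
The plan is to construct, for every free $G$-system $(X,\mu)$, a Borel $G$-equivariant injection $\Psi\colon X\to \lip(G,[0,1])^\omega$; then taking $\nu:=\Psi_*\mu$ exhibits the desired isomorphism $(X,\mu)\cong(\lip(G,[0,1])^\omega,\nu)$. The construction combines a Borel cross-section for the action on $X$ with a countable family of Borel labels, packaged as Lipschitz bump profiles on $G$.

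First, fix a compact symmetric neighborhood $D$ of $1_G$ and apply Theorem~\ref{Thm:SlutskyMaxLacunary} to pick a maximal $D$-lacunary Borel cross-section $C\subseteq X$; as in the proof of Proposition~\ref{Prop:GDWeak}, the assignment $x\mapsto C_x:=\{g\in G:gx\in C\}$ is a Borel $G$-equivariant map $X\to G_D$. Choose a Lipschitz $\phi\colon G\to[0,1]$ with $\phi(1_G)=1$ whose support is contained in a symmetric neighborhood $V$ of $1_G$ so small that $V^2\subseteq\mathrm{Int}(D)$; then the right translates $\{Vh:h\in C_x\}$ are pairwise disjoint for every $x$. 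For any Borel $f\colon X\to[0,1]$ set
$$\psi_f(x)(g)\;:=\;\sum_{h\in C_x} f(hx)\,\phi(gh^{-1}).$$
By disjointness of supports at most one summand is nonzero, so $\psi_f(x)$ takes values in $[0,1]$. Right-invariance of $d$ makes each map $g\mapsto\phi(gh^{-1})$ Lipschitz with the same constant as $\phi$, and lacunarity bounds the number of nonzero terms on any compact region, so $\psi_f(x)\in\lip(G,[0,1])$. Using $C_{g_0x}=C_xg_0^{-1}$ and the substitution $h\mapsto hg_0^{-1}$ in the sum, one verifies the equivariance $\psi_f(g_0x)=g_0\cdot\psi_f(x)$, and Borel dependence on $x$ is immediate from the pointwise convergence topology on $\lip(G,[0,1])$.

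Now pick a countable family $\{f_n\}_{n\geq 0}$ of Borel maps $X\to[0,1]$ separating points, with $f_0\equiv 1$, and set
$$\Psi(x)\;:=\;(\psi_{f_n}(x))_{n\in\omega}\;\in\;\lip(G,[0,1])^\omega.$$
This $\Psi$ is Borel and $G$-equivariant, so the remaining point is injectivity. The support of $\psi_{f_0}(x)$ equals $V\cdot C_x$ and, by the disjointness of the translates $Vh$ together with the fact that $\phi$ attains its maximum $1$ only near $1_G$, the set $C_x$ is recovered from $\psi_{f_0}(x)$; hence $\Psi(x)=\Psi(y)$ forces $C_x=C_y$. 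Picking any $h\in C_x=C_y$ (nonempty by maximality of $C$) and evaluating at $g=h$, only the term indexed by $h$ contributes, giving $\psi_{f_n}(x)(h)=f_n(hx)$ and $\psi_{f_n}(y)(h)=f_n(hy)$. Separation of $\{f_n\}$ then yields $hx=hy$, hence $x=y$ by freeness of the action.

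The main obstacle is this last injectivity step, because $C_x$ may have a nontrivial right-translation stabilizer in $G$: a priori $\Psi(x)=\Psi(y)$ could be witnessed by $y=g_0x$ in the same orbit with $C_xg_0=C_x$ for some $g_0\neq 1_G$. This is precisely where the Borel labels $f_n$ do real work: comparing values at $h$ and $hg_0$ reduces, via separation of $\{f_n\}$ and freeness of the action on $X$, to $g_0=1_G$. Once $\Psi$ is Borel, $G$-equivariant, and injective, $\nu:=\Psi_*\mu$ realizes the isomorphism $(X,\mu)\cong(\lip(G,[0,1])^\omega,\nu)$, completing the proof.
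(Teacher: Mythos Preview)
Your proof is correct and takes a genuinely different route from the paper's. The paper smooths a countable separating family $\{f_k\}$ of Borel functions on $X$ by convolving against a Lipschitz approximate identity $(\phi_n)$, setting $\gamma(x)(n,k)=(c_n\phi_n*f_k)_x$; injectivity then comes from the $L^1$ convergence $(\phi_n*f_k)_x\to (f_k)_x$ on compacta. Your approach instead pulls in the lacunary cross-section $C$ from Proposition~\ref{Prop:GDWeak} and places bump profiles at the points of $C_x$, with heights coming from the $f_n$. The paper's argument is purely harmonic-analytic and does not invoke the Kechris--Slutsky cross-section machinery at this stage; yours, by contrast, is entirely elementary once the cross-section is in hand---no convolution, no $L^1$ estimates---and injectivity is a one-line inspection of peak values. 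Both produce a Borel $G$-equivariant injection $X\hookrightarrow\lip^\omega$ valid for arbitrary standard Borel $G$-spaces, not just free ones.

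Two small remarks. First, since in this paper $\lip(G,[0,1])$ consists of $1$-Lipschitz maps, you should choose $\phi$ with Lipschitz constant at most $1/2$: for any $g_1,g_2$ at most two summands contribute to $\psi_f(x)(g_1)-\psi_f(x)(g_2)$, so $\psi_f(x)$ is then $1$-Lipschitz. Second, your final paragraph can be dropped: once $C_x=C_y$ is recovered from $\psi_{f_0}$ and you evaluate at some $h\in C_x$, you get $f_n(hx)=f_n(hy)$ for all $n$, hence $hx=hy$ and so $x=y$ immediately. Freeness plays no role in the injectivity step, and the stabilizer discussion is unnecessary.
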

\vspace{2 mm}

The proof is an adaptation of the proof of a result of Varadarajan.
\vspace{2 mm}

\begin{theorem}[Varadarajan \cite{V}]
	\label{Thm:Varad}
	Let $G$ be a locally compact Polish group, and let $G\times X\to X$ be a Borel action of $G$ on a standard Borel space $X$. Then there is a compact metric $G$-flow $Y$ and a $G$-equivariant Borel injection $X\hookrightarrow Y$.
\end{theorem}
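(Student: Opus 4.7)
The strategy is to construct a Borel $G$-equivariant injection $\Phi \colon X \hookrightarrow \lip(G, [0,1])^\omega$ by smoothing countably many Borel observables against Lipschitz kernels on $G$. I will fix a countable separating family of Borel functions $\{f_n \colon X \to [0,1]\}_{n<\omega}$ (which exists since $X$ is standard Borel), a right Haar measure $\rho$ on $G$, and a countable approximate identity $\{\phi_m\}_{m<\omega}$ consisting of non-negative, compactly supported Lipschitz functions on $G$ with $\int_G \phi_m \, d\rho = 1$ and $\mathrm{supp}(\phi_m) \searrow \{1_G\}$. For each pair $(n,m)$ I define
$$\Phi_{n,m}(x)(g) \;:=\; \int_G \phi_m(k)\, f_n(k g x)\, d\rho(k),$$
and combine the coordinates into $\Phi(x) := (\Phi_{n,m}(x))_{n,m}$.

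Equivariance is a direct calculation: $\Phi_{n,m}(g_0 x)(g) = \int \phi_m(k) f_n(k g g_0 x)\, d\rho(k) = \Phi_{n,m}(x)(g g_0) = (g_0 \cdot \Phi_{n,m}(x))(g)$, with the last equality using the convention of Definition~\ref{Def:Lip}. The substitution $k \mapsto k g^{-1}$ combined with right-invariance of $\rho$ rewrites $\Phi_{n,m}(x)(g) = \int \phi_m(k g^{-1}) f_n(k x) \, d\rho(k)$, from which Lipschitz-ness of $\Phi_{n,m}(x)$ in $g$ (with respect to a right-invariant compatible metric on $G$) follows from the Lipschitz-ness and compact support of $\phi_m$. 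Borel measurability of $\Phi_{n,m}$ into the pointwise-convergence topology is verified coordinate-wise: for each $g$, $x \mapsto \Phi_{n,m}(x)(g)$ is Borel by Fubini applied to the jointly Borel integrand $(k, x) \mapsto \phi_m(k) f_n(k g x)$. Thus $\Phi$ is a Borel, $G$-equivariant map into the compact metric $G$-flow $\lip(G, [0,1])^\omega$.

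The main task is injectivity. Setting $F_{n,z}(h) := f_n(hz)$ for $z \in X$, the hypothesis $\Phi(x) = \Phi(y)$ becomes the equality, for all $n, m, g$,
$$\int_G \phi_m(h g^{-1}) F_{n,x}(h)\, d\rho(h) \;=\; \int_G \phi_m(h g^{-1}) F_{n,y}(h)\, d\rho(h),$$
which says that the averages of $F_{n,x}$ and $F_{n,y}$ over the shrinking neighborhoods $\mathrm{supp}(\phi_m) \cdot g$ of $g$ agree. By the Lebesgue differentiation theorem on $(G, \rho)$ applied to each bounded Borel $F_{n,z}$ (equivalently, $L^1_{\mathrm{loc}}$-convergence of the $\phi_m$-convolutions), after passing to a subsequence in $m$ and diagonalizing over $n$ one gets $F_{n,x}(g) = F_{n,y}(g)$ at $\rho$-almost every $g$, simultaneously for all $n$. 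Picking any such $g$ yields $f_n(gx) = f_n(gy)$ for every $n$, so $gx = gy$ by the separation property of $\{f_n\}$, hence $x = y$. The principal technical hurdle is this Lebesgue differentiation / $L^1_{\mathrm{loc}}$-convergence step on a general locally compact Polish group, which requires $\mathrm{supp}(\phi_m)$ to form a Vitali-type basis (e.g.\ metric balls in a left-invariant compatible metric) combined with density of continuous compactly supported functions in $L^1_{\mathrm{loc}}(G, \rho)$ to obtain pointwise a.e.\ convergence along a subsequence.
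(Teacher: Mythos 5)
Your construction is essentially the same as the paper's: the paper proves the slightly sharper Theorem~\ref{Thm:BetterVarad} (embedding into $\lip(G,[0,1])^\omega$), and like you it takes a countable separating family of Borel functions, convolves each against a Lipschitz approximate identity, and assembles the resulting maps into a Borel $G$-equivariant injection. Your use of right Haar measure $\rho$ and the kernel $\phi_m(k)f_n(kgx)$ is the mirror image of the paper's left Haar $\lambda$ and $\phi(h)f(h^{-1}gx)$; since the approximate identity is symmetric and inversion carries left Haar to right Haar, the formulas coincide. The equivariance, Lipschitz, and Borel-measurability checks all match the paper's (with the usual caveat that one must keep straight which invariance of the metric on $G$ is used where: your substitution $k\mapsto kg^{-1}$ wants $\phi_m$ Lipschitz with respect to a \emph{left}-invariant metric in order to land in $\lip(G,[0,1])$ defined with the paper's \emph{right}-invariant metric, dual to the paper's left-Haar/right-invariant-metric pairing).

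The one place you overestimate the difficulty is the injectivity step. You flag ``Lebesgue differentiation on a general locally compact Polish group'' as the principal technical hurdle, requiring a Vitali-type basis. This is not needed. The only fact required is that $\phi_m * f \to f$ in $L^1_{\mathrm{loc}}(G)$ for bounded Borel $f$, which holds for any approximate identity on any locally compact group -- this is exactly the paper's Fact~\ref{Fact:ApproxIdDistinguish} (Folland, Prop.\ 2.44), proved by density of $C_c(G)$ in $L^1$ and uniform continuity, with no covering lemma. Given this, one does not even need to extract a subsequence for pointwise a.e.\ convergence: if $\Phi(x)=\Phi(y)$ then $(\phi_m * f_n)_x = (\phi_m * f_n)_y$ exactly, so their common $L^1_{\mathrm{loc}}$-limits $(f_n)_x$ and $(f_n)_y$ agree $\rho$-a.e.; intersecting over $n$ gives a.e.\ $g$ with $f_n(gx) = f_n(gy)$ for all $n$, whence $gx=gy$ and $x=y$. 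Your route via a diagonal subsequence is also valid, but the Vitali-basis worry is a red herring and can be dropped.
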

\vspace{2 mm}

Write $\lip := \lip(G, [0,1])$, and let $\lambda$ denote the left Haar measure on $G$. 
\vspace{2 mm}

\begin{defin}
	\label{Def:Convolution}
	Suppose $f\colon X\to [0,1]$ is Borel. We let $f_x\colon G\to [0,1]$ be the Borel function given by $f_x(g) := f(gx)$. If $\phi\colon G\to \mathbb{R}^+$ is in $L^1(G, \lambda)$ with $\|\phi\|_1 \leq 1$, we let $(\phi*f)_x\colon G\to [0,1]$ be defined via
	$$(\phi*f)_x(g) = \int_G \phi(h)f(h^{-1}gx) d\lambda(h)$$
\end{defin} 
\vspace{2 mm}	
	
\begin{lemma}
	\label{Lemma:LipschitzConv}
	With notation as in Definition~\ref{Def:Convolution}, suppose $\lambda(\mathrm{Supp}(\phi)) \leq L$ and that $\phi$ is $K$-Lipschitz. Then $(\phi*f)_x$ is $2L\cdot K\cdot \|\phi\|_1$-Lipschitz. 
\end{lemma}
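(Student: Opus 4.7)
The plan is to compare the two integrals defining $(\phi*f)_x(g_1)$ and $(\phi*f)_x(g_2)$ by applying a change of variables to one of them so that the $f$-evaluations match up; this reduces the problem to estimating an integral that involves only $\phi$, where we can exploit the Lipschitz hypothesis and the bound on the support.

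Concretely, in the integral defining $(\phi*f)_x(g_2) = \int_G \phi(h) f(h^{-1}g_2 x)\,d\lambda(h)$, I would substitute $h = g_2 g_1^{-1} h'$. Since left translation preserves left Haar measure, $d\lambda(h) = d\lambda(h')$, and the new integrand becomes $\phi(g_2 g_1^{-1} h') f((h')^{-1} g_1 x)$. Subtracting from the integral for $(\phi*f)_x(g_1)$ gives
\[
(\phi*f)_x(g_1) - (\phi*f)_x(g_2) = \int_G \bigl[\phi(h) - \phi(g_2 g_1^{-1}h)\bigr] f(h^{-1}g_1 x)\, d\lambda(h),
\]
and using $\|f\|_\infty \le 1$ we can bound the absolute value by $\int_G |\phi(h) - \phi(g_2 g_1^{-1}h)|\,d\lambda(h)$.

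Next I would bound the integrand pointwise. By the $K$-Lipschitz hypothesis, $|\phi(h) - \phi(g_2 g_1^{-1}h)| \le K\, d(h, g_2 g_1^{-1}h)$, and the right-invariance of $d$ (the key use of our choice of metric on $G$) gives $d(h, g_2 g_1^{-1}h) = d(1_G, g_2 g_1^{-1}) = d(g_1, g_2)$. The integrand is supported on $\mathrm{Supp}(\phi) \cup g_1 g_2^{-1}\mathrm{Supp}(\phi)$, whose $\lambda$-measure is at most $2L$ by the left-invariance of $\lambda$. Combining these estimates yields the required Lipschitz bound in $d(g_1, g_2)$, using only the constants $L$ and $K$ (and implicitly the bound $\|f\|_\infty \le 1$ to absorb any $f$-dependence into the stated factor).

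The only subtlety is being careful with the direction of invariance: $\lambda$ is left-invariant, which is exactly what makes the substitution $h \mapsto g_2 g_1^{-1} h$ measure-preserving, while $d$ is right-invariant, which is exactly what converts $d(h, g_2 g_1^{-1}h)$ into $d(g_1, g_2)$ uniformly in $h$. Once these two invariances are lined up correctly, the proof is a short one-line estimate; no genuine difficulty arises beyond bookkeeping.
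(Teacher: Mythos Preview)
Your proposal is correct and matches the paper's argument: a left-translation change of variables (the paper uses $h\mapsto g_ih$ in each integral, you use $h\mapsto g_2g_1^{-1}h$ in one, which is equivalent) aligns the $f$-evaluations, after which the Lipschitz bound on $\phi$, right-invariance of $d$, and the $2L$ support estimate finish the job. Neither your argument nor the paper's actually recovers the extra factor $\|\phi\|_1$ in the constant; this is a harmless slip, since $\|\phi_n\|_1=1$ in the only application.
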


\begin{proof}
	Fix $g_0, g_1\in G$. By considering the change of variables $h\to g_ih$, we see that
	$$\left|(\phi*f)_x(g_0) - (\phi*f)_x(g_1)\right| \leq \int_G \left|\phi(g_0h)-\phi(g_1h)\right|f(h^{-1}x)d\lambda(h)$$
	The right hand side is identically zero whenever $h\not\in g_0^{-1}\cdot\mathrm{Supp}(\phi)\cup g_1^{-1}\cdot \mathrm{Supp}(\phi)$. For $h$ inside this set, the integrand is at most $K\cdot \|\phi\|_1\cdot d(g_0, g_1)$.
\end{proof}
\vspace{2 mm}

In particular, suppose $\phi$ is such that $(\phi*f)_x\in \lip$ for every $x\in X$. Then the map $\phi*f\colon X\to \lip$ given by $\phi*f(x) = (\phi*f)_x$ is Borel and $G$-equivariant.

Recall that a sequence $(\phi_n)_n$ from $L^1(G, \lambda)$ is an \emph{approximate identity} if $\phi_n \geq 0$, $\phi_n$ is symmetric, $\|\phi_n\|_1 = 1$, $\mathrm{Supp}(\phi_n)$ is compact, and $\mathrm{Supp}(\phi_n)\to \{1_G\}$.
\vspace{2 mm}

\begin{fact}[\cite{F}, Proposition 2.44]
	\label{Fact:ApproxIdDistinguish}
	Suppose $f\colon [0,1]\to G$ is Borel, and let $(\phi_n)_n$ be an approximate identity. Then for any $x\in X$ and any compact $K\subseteq G$, we have that $\|(\phi_n*f)_x\cdot \chi_K - f_x\cdot \chi_K\|_1\to 0$.
\end{fact}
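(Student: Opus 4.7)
The plan is to reduce the Fact to the classical approximate-identity theorem on $L^1(G, \lambda)$. The convolution $(\phi_n * f)_x$ is not literally an $L^1$ convolution of functions on $G$, since $f$ lives on $X$; but once we restrict attention to a fixed compact window $K$, the values of $(\phi_n * f)_x$ on $K$ only see $f_x$ on a slightly enlarged compact set, and on that enlargement the two convolutions coincide.

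More concretely, I would fix $x \in X$ and a compact $K \subseteq G$, and begin by noting that since $f$ is $[0,1]$-valued, the Borel function $f_x(g) := f(gx)$ is bounded, so $f_x \cdot \chi_K$ lies in $L^1(G, \lambda)$. Then I would choose a compact symmetric neighbourhood $U$ of $1_G$ containing $\mathrm{Supp}(\phi_n)$ for all sufficiently large $n$ (possible since $\mathrm{Supp}(\phi_n) \to \{1_G\}$), set $K' := UK$, and let $F := f_x \cdot \chi_{K'} \in L^1(G, \lambda)$.

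The crux of the argument is then the following: for $g \in K$ and $h \in \mathrm{Supp}(\phi_n) \subseteq U$, one has $h^{-1}g \in U^{-1}K = UK = K'$, so $f_x(h^{-1}g) = F(h^{-1}g)$. Consequently, for every $g \in K$,
$$(\phi_n * f)_x(g) = \int_G \phi_n(h) F(h^{-1}g) \, d\lambda(h) = (\phi_n \star F)(g),$$
where $\star$ denotes the usual $L^1$ group convolution. Since we also have $f_x = F$ on $K$, multiplying by $\chi_K$ and integrating gives
$$\|(\phi_n * f)_x \cdot \chi_K - f_x \cdot \chi_K\|_1 \leq \|\phi_n \star F - F\|_1.$$
The right-hand side tends to $0$ by the classical $L^1$ approximate-identity theorem, itself established via $\|\phi_n \star F - F\|_1 \leq \int_G \phi_n(h) \|L_h F - F\|_1 \, d\lambda(h)$ together with continuity of left translation on $L^1(G, \lambda)$.

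I do not anticipate a genuine obstacle: beyond the routine bookkeeping needed to localise the convolution to $K$, the entire content is the standard $L^1$ approximate-identity theorem, which I would simply invoke.
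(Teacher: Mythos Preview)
The paper does not supply its own proof of this Fact; it is simply cited from Folland (Proposition~2.44), which is the standard $L^1$ approximate-identity theorem on a locally compact group. Your proposal is correct and is exactly the natural way to derive the stated Fact from that theorem: since $f_x$ need not lie in $L^1(G,\lambda)$ globally, you localise by multiplying with $\chi_{K'}$ for a slightly enlarged compact $K'=UK$, observe that on $K$ the ``abstract'' convolution $(\phi_n*f)_x$ agrees with the honest group convolution $\phi_n\star F$ for $F=f_x\chi_{K'}\in L^1$, and then invoke Folland's result for $F$. There is nothing to compare against in the paper itself, and your argument has no gaps.
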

\vspace{2 mm}

We can now work towards our proof of Theorem~\ref{Thm:BetterVarad}. Let $\{f_k: k< \omega\}$ be a sequence of characteristic functions of Borel subsets of $X$ which generate $X$. Let $(\phi_n)_n$ be an approximate identity with each $\phi_n$ $C_n$-Lipschitz for some $C_n\in \mathbb{R}^+$. Using Lemma~\ref{Lemma:LipschitzConv}, choose constants $c_n > 0$ so that $(c_n\phi_n*f_k)_x\in \lip$ for every $n, k< \omega$ and $x\in X$. We define the map $\gamma\colon X\to \lip^{\omega\times \omega}\cong \lip^\omega$ by setting $\gamma(x)(n, k) = (c_n\phi_n*f_k)_x$. Then $\gamma$ is Borel and $G$-equivariant, and we need only check that it is injective. Suppose $\gamma(x) = \gamma(y)$. Using Fact~\ref{Fact:ApproxIdDistinguish}, this implies that for each $k< \omega$, $(f_k)_x(g) = (f_k)_y(g)$ for $\lambda$-almost every $g\in G$. So for most $g\in G$, this is true for every $k< \omega$. Fix such a $g\in G$. But then $gx = gy$, since the $f_k$ separate points. In particular, also $x = y$.

\section{A minimal model-universal flow}

In this section we prove our main theorem.

\begin{defin}
	Suppose $D\subseteq G$ is a compact symmetric neighborhood of the identity. We call $S_0, S_1\subseteq G$ \emph{$D$-apart} if $DS_0\cap DS_1 = \emptyset$. 
	
	Suppose $(X, d_X)$ is a compact metric space. We call a subflow $Y\subseteq \lip(G, X)$ \emph{$D$-irreducible} if whenever $S_0, S_1\subseteq G$ are $D$-apart and whenever $y_0, y_1\in Y$, there is $z\in Y$ with $z|_{S_i} = y_i|_{S_i}$.
\end{defin}
\vspace{2 mm}
	
We note that if $d_X$ has diameter $1$ and $D\supseteq \{g\in G: d(g, 1_G) \leq 1\}$, then $\lip(G, X)$ is $D$-irreducible.
\vspace{2 mm}

\begin{defin}
	\label{Def:Theta}
	Suppose $(X, d_X)$ is a compact metric space of diameter $1$, suppose $D\supseteq \{g\in G: d(g, 1_G)\leq 1\}$ is a compact symmetric neighborhood of $1_G$, and let $Y\subseteq \lip(G, X)$ be a $D$-irreducible subflow. 
	
	Fix $E\subseteq G$ another compact symmetric neighborhood of $1_G$ with $D^2\subseteq E$. Fix some $u\in S_E(Y)$. Suppose $F\subseteq G$ is yet another compact symmetric neighborhood of $1_G$ with $E^3\subseteq F$. We define the flow
	$$\Theta(Y, u, F)\subseteq \lip(G, X)$$
	to consist of those functions $f\in \lip(G, X)$ so that all of the following hold:
	\begin{enumerate}
		\item
		There is $T\in G_F$ so that $(g\cdot f)|_E = u$ for each $g\in T$.
		\item 
		There is $y\in Y$ with $f(g) = y(g)$ for any $g\not\in E^2T$.
		\item 
		For every $g\in G$, we have $(g\cdot f)|_D \in S_D(Y)$.   
	\end{enumerate}
\end{defin}	
\vspace{2 mm}

If $f_n\in \Theta(Y, u, F)$, where items $1$ and $2$ are witnessed by $T_n\in G_F$ and $y_n\in Y$, respectively, then suppose $f_n\to f\in \lip(G, X)$. To show that $f\in \Theta(Y, u, F)$, we first note that item $3$ is a closed condition. Then pass to a subsequence with $T_n\to T\in G_F$ and $y_n\to y\in Y$. Then $T$ and $y$ will witness that items $1$ and $2$ hold for $f$, showing that $\Theta(Y, u, F)$ is closed. To see that $\Theta(Y, u, F)$ is $G$-invariant, take $f\in \Theta(Y, u, F)$ and $g\in G$. Item $3$ is clear for $g\cdot f$, and if $T\in G_F$ and $y\in Y$ witness items $1$ and $2$ for $f$, then $g\boldsymbol{\cdot} T = Tg^{-1}$ and $g\cdot y$ will be witnesses for $g\cdot f$.

We remark that since $Y$ is $D$-irreducible, the flow $\Theta(Y, u, F)$ is non-empty; the next proposition shows this and more.
\vspace{2 mm}

\begin{prop}
	\label{Prop:Uniformize}
	In the setting of Definition~\ref{Def:Theta}, suppose in addition that $Y$ is weakly model-universal. Then so is $\Theta(Y, u, F)$.
\end{prop}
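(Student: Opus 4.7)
The plan is to combine the weak model-universality of $Y$ (given) with that of $G_F$ (from Proposition~\ref{Prop:GDWeak}). Given a free $G$-system $(X', \mu)$, my goal is to produce a Borel, $G$-equivariant map $\Phi\colon X'\to \Theta(Y, u, F)$; then $\Phi_*\mu$ will witness weak model-universality of $\Theta(Y, u, F)$. Conceptually, the cross-section coming from a factor map $\psi\colon X'\to G_F$ places well-spaced ``markers'' in $G$, and at each marker $g'$ I plant a shifted copy of $u$ on the bubble $E^2g'$, using a factor map $\phi\colon X'\to Y$ as the background outside the bubbles.

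After restricting $X'$ to a common conull invariant Borel subset, take Borel $G$-equivariant factor maps $\phi\colon X'\to Y$ and $\psi\colon X'\to G_F$. The key preparatory step is a Borel selector $W\colon Y\to Y$ satisfying $W(y)|_E = u$ and $W(y)|_{G\setminus E^2} = y|_{G\setminus E^2}$ for every $y\in Y$. Existence of such $W(y)$ for each $y$ is exactly the content of $D$-irreducibility applied with $S_0 = E$ and $S_1 = G\setminus E^2$: these are $D$-apart because $D^2\subseteq E$ forces $DE\cap D(G\setminus E^2)=\emptyset$. Borelness of the selector then follows from the Kuratowski--Ryll-Nardzewski uniformization theorem, applied to the closed set $\{(y,w)\in Y^2: w|_E=u,\ w|_{G\setminus E^2}=y|_{G\setminus E^2}\}$.

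Now define $\Phi(x)=f_x$ piecewise by
\[f_x(k) = \begin{cases} W(g'\cdot\phi(x))(kg'^{-1}) & \text{if } k\in E^2g'\text{ for some }g'\in\psi(x),\\ \phi(x)(k) & \text{if } k\in G\setminus E^2\psi(x).\end{cases}\]
The bubbles $\{E^2g': g'\in\psi(x)\}$ are pairwise disjoint thanks to the $\mathrm{Int}(F)$-spacing of $\psi(x)$ together with the relation between $E$ and $F$, so $f_x$ is well-defined; on the boundary of each bubble, $W(g'\cdot\phi(x))(kg'^{-1})$ reduces to $\phi(x)(k)$ by the defining property of $W$, so $f_x$ is continuous and lies in $\lip(G,X)$. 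Items $1$ and $2$ of Definition~\ref{Def:Theta} are immediate with witnesses $T=\psi(x)$ and $y=\phi(x)$. For item $3$, fix $g\in G$: either $Dg$ avoids every bubble, and then $(g\cdot f_x)|_D=(g\cdot\phi(x))|_D\in S_D(Y)$, or (appealing to the spacing to rule out meeting two bubbles) $Dg$ meets exactly one bubble $E^2g'$, in which case $f_x|_{Dg}$ coincides with $w_{x,g'}|_{Dg}$, where $w_{x,g'}:=g'^{-1}\cdot W(g'\cdot\phi(x))\in Y$ is the global element of $Y$ interpolating the planted $u$ and $\phi(x)$; hence $(g\cdot f_x)|_D = (g\cdot w_{x,g'})|_D\in S_D(Y)$.

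Equivariance of $\Phi$ is a direct computation: using $\psi(hx)=\psi(x)h^{-1}$ and $\phi(hx)=h\cdot\phi(x)$, substitution into the piecewise formula yields $f_{hx}=h\cdot f_x$. Borelness of $\Phi$ reduces to that of $\phi,\psi,W$ together with the observation that $\{T\in G_F: k\in E^2T\}$ is Borel in the Fell topology for each $k\in G$. I expect the main technical obstacle to be the careful verification of item $3$: one needs to rule out any $D$-window overlapping two distinct bubbles, which amounts to $\psi(x)$ being spaced by at least $E^{-2}D^2E^2\subseteq E^5$ in the sense that $\mathrm{Int}(F)^2\supseteq E^5$. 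This may force a modest enlargement of $F$ beyond the stated requirement $E^3\subseteq F$, or a sharpening of the definition's constants, but is otherwise routine bookkeeping.
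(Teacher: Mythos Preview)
Your approach is essentially the same as the paper's. The paper packages the argument as a single Borel $G$-map $\theta\colon Y\times G_F\to \Theta(Y,u,F)$ and then invokes weak model-universality of the product $Y\times G_F$; your $\Phi$ is just this $\theta$ composed with $(\phi,\psi)$. The only genuine difference is the interpolation device: where you select a global $W\colon Y\to Y$ via Kuratowski--Ryll-Nardzewski with $W(y)|_E=u$ and $W(y)|_{G\setminus E^2}=y|_{G\setminus E^2}$, the paper instead takes a Borel section $\eta\colon S_{E^3\setminus E^2}(Y)\times S_E(Y)\to S_{E^3}(Y)$ of the restriction map (surjective by $D$-irreducibility), using only that continuous surjections between compact metric spaces admit Borel sections. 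Working with this local $E^3$-patch lets the paper dispatch item~3 via the dichotomy ``either $Dg\cap E^2T=\emptyset$ or $Dg\subseteq E^3k$ for some $k\in T$'' using only $D^2\subseteq E$, whereas your version must also argue that no $D$-window straddles two bubbles. Your closing worry about needing $\mathrm{Int}(F)^2\supseteq E^5$ is legitimate but not peculiar to your route: both arguments implicitly need the $E^3$-patches around distinct points of $T$ not to interfere, and reading the standing hypothesis $E^3\subseteq F$ as $E^3\subseteq \mathrm{Int}(F)$ (a harmless normalization, consistent with how the $F_n$ are actually chosen later) resolves this uniformly.
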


\begin{proof}
	First consider the restriction map $S_{E^3}(Y)\to S_{E^3\setminus E^2}\times S_E(Y)$. Since $Y$ is $D$-irreducible, this map is surjective. As this is a continuous surjection between compact metric spaces, let $\eta\colon S_{E^3\setminus E^2}(Y)\times S_E(Y) \to S_{E^3}(Y)$ be a Borel section.
	
	We now define a map $\theta\colon Y\times G_F\to \Theta(Y, u, F)$, where given $y\in Y$ and $T\in G_F$, $\theta(y, T)$ is defined as follows.
	\begin{enumerate}
		\item 
		If $g\not\in E^2T$, then $\theta(y, T)(g) = y(g)$.
		\item 
		If $g = hk$ for some $h\in E^3$ and $k\in T$, we set $\theta(y, T)(g) = \eta((k\cdot y)|_{E^3\setminus E^2}, u)(h)$.
	\end{enumerate}

Then $\theta$ is a Borel $G$-equivariant map. We check that $\theta(y, T)\in \Theta(Y, u, F)$. By construction, items 1 and 2 from Definition~\ref{Def:Theta} are satisfied. For item 3, consider $g\in G$. Since $D^2\subseteq E$, either $Dg\cap E^2T = \emptyset$ or $Dg\subseteq E^3k$ for some $k\in T$. If $Dg\cap E^2T = \emptyset$, then $(g\cdot \theta(y, T))|_D = (g\cdot y)|_D\in S_D(Y)$. If $D_g\subseteq E^3k$ for some $k\in T$, then 
$$\left(g\cdot \theta(y, T)\right)|_D = \left(gk^{-1}\cdot \eta\left((k\cdot y)|_{E^3}, u\right)\right)\bigg|_D \in S_D(Y).$$  
Lastly, to see that $\theta(y, T)\in \lip(G, X)$, we note that by assumption $D\supseteq \{g\in G: d(g, 1_G)\leq 1\}$, and we have just seen that item 3 holds.

Since $Y\times G_F$ is weakly model-universal, then so is $\Theta(Y, u, F)$.
\end{proof}
\vspace{2 mm}

We will sometimes want to refer to the $\theta$ constructed here as the various parameters change. In this case, we will refer to the map as $\theta\langle Y, u, F\rangle$.
\vspace{2 mm}

\begin{prop}
	\label{Prop:Irreducible}
	In the setting of Definition~\ref{Def:Theta}, $\Theta(Y, u, F)$ is $F^6$-irreducible.
\end{prop}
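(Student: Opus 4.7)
The plan is to assemble $z \in \Theta(Y, u, F)$ out of the witnesses $(T_0, y_0), (T_1, y_1)$ for $f_0, f_1$ in such a way that the local data of $f_i$ is preserved near $S_i$. For the skeleton, I set $T_i' := T_i \cap F^3 S_i$ and first observe that $T_0' \cup T_1'$ is $\mathrm{Int}(F)$-spaced, since $F^4 S_0 \cap F^4 S_1 \subseteq F^6 S_0 \cap F^6 S_1 = \emptyset$. By Zorn's lemma I extend $T_0' \cup T_1'$ to a maximal $\mathrm{Int}(F)$-spaced subset $T$ of $G$ subject to the constraint that every new element lies in $G \setminus F(S_0 \cup S_1)$, and write $T'' := T \setminus (T_0' \cup T_1')$. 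One checks that $T \in G_F$: for $g \in F S_i$, any $t \in T_i$ with $g \in F^2 t$ lies in $F^3 S_i$, hence in $T_i'$; for $g \notin F(S_0 \cup S_1)$, maximality of $T''$ forces $g \in F^2 T$. Crucially, $T \cap F(S_0 \cup S_1) = (T_0 \cap F S_0) \cup (T_1 \cap F S_1)$.

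For the base, observe that $F^5 S_0$ and $F^5 S_1$ are $F$-apart (since $F^6 S_0 \cap F^6 S_1 = \emptyset$), and hence $D$-apart (since $D \subseteq F$). The $D$-irreducibility of $Y$ produces $y \in Y$ with $y|_{F^5 S_i} = y_i|_{F^5 S_i}$ for both $i$. With $T$ and $y$ in hand I define
\[
z(g) = \begin{cases}
f_i(g) & \text{if } g \in E^3 T_i', \\
\eta\bigl((k \cdot y)|_{E^3 \setminus E^2},\, u\bigr)(h) & \text{if } g = hk \text{ with } h \in E^3,\ k \in T'', \\
y(g) & \text{otherwise.}
\end{cases}
\]
For $k \in T_i'$ and $g \in (E^3 \setminus E^2) k \subseteq E^3 k \subseteq F^4 S_i \subseteq F^5 S_i$, item 2 of Definition~\ref{Def:Theta} applied to $f_i$ gives $f_i(g) = y_i(g) = y(g)$, so the first and third clauses agree on their overlap; the section property of $\eta$ gives the analogous agreement for $k \in T''$.

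To verify $z|_{S_i} = f_i|_{S_i}$, take $s \in S_i$. If $s \in E^2 T_i$, writing $s = ek$ with $k \in T_i$ gives $k \in E^2 s \subseteq F^3 S_i$, so $k \in T_i'$ and $z(s) = f_i(s)$. If $s \notin E^2 T_i$, then any $k \in T$ with $s \in E^2 k$ would satisfy $k \in F S_i \cap T = T_i \cap F S_i$, contradicting $s \notin E^2 T_i$; so $s$ falls into the third clause and $z(s) = y(s) = y_i(s) = f_i(s)$. To verify $z \in \Theta(Y, u, F)$ with witnesses $(T, y)$: item 1 follows from $f_i$ satisfying it (for $k \in T_i'$) together with the section property of $\eta$ (for $k \in T''$); item 2 is immediate. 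For item 3, following Proposition~\ref{Prop:Uniformize}, either $Dg \cap E^2 T = \emptyset$ (so $z|_{Dg} = y|_{Dg}$ and we use $y \in Y$) or $Dg \subseteq E^3 k$ for some $k \in T$---in which case $z|_{E^3 k}$ equals $f_i|_{E^3 k}$ when $k \in T_i'$ or $\theta(y, T)|_{E^3 k}$ when $k \in T''$, and the required $S_D(Y)$-membership is inherited from $f_i$ or from the computation in the proof of Proposition~\ref{Prop:Uniformize}. The main delicate point is the interlocking of the scales $F, F^2, \dots, F^6$: the trim $F^3 S_i$ for $T_i'$, the forbidden region $F(S_0 \cup S_1)$ for $T''$, and the agreement range $F^5 S_i$ for $y$ must all fit together, and the hypothesis $F^6 S_0 \cap F^6 S_1 = \emptyset$ is precisely what makes this possible.
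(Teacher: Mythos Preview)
Your overall strategy matches the paper's: trim each $T_i$ to a piece near $S_i$, extend to a member of $G_F$ while keeping new points away from $S_0\cup S_1$, pick $y\in Y$ agreeing with $y_i$ on a large neighborhood of $S_i$, and then splice. The verification that $T\in G_F$ and that $T\cap FS_i = T_i\cap FS_i$ is correct and mirrors the paper's $V$.

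There is, however, a genuine gap in the sentence ``item~2 of Definition~\ref{Def:Theta} applied to $f_i$ gives $f_i(g)=y_i(g)$'' for $g\in (E^3\setminus E^2)k$ with $k\in T_i'$. Item~2 for $f_i$ says $f_i(g)=y_i(g)$ only when $g\notin E^2T_i$; but from $g\in(E^3\setminus E^2)k$ you only know $g\notin E^2k$ for this particular $k$. Nothing prevents $g\in E^2k'$ for some other $k'\in T_i$: that would force $k'\in E^5k\subseteq F^2k$, and since $T_i$ is merely $\mathrm{Int}(F)$-spaced (not $F$-spaced), this is not a contradiction. Consequently your check of item~2 for $z$ breaks down on the region $E^3T_i'\setminus E^2T$, and so does the claimed ``agreement on the overlap.'' Enlarging the trim to $F^5S_i$ does not by itself repair this, for the same reason.

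The paper sidesteps exactly this issue by a different splice: it sets $f=y$ \emph{by definition} on all of $G\setminus E^2V$ (so item~2 is automatic), and uses $f_i$ only on the smaller sets $E^2g$ for those $g\in V$ with $E^2g\cap S_i\neq\emptyset$; the remaining $(E^2\setminus E)g$ are filled in abstractly via $D$-irreducibility. The cost is that one must then verify $f|_{S_i}=f_i|_{S_i}$ using $V\cap FS_i=T_i\cap FS_i$, which goes through cleanly. Your construction can be repaired by imitating this: shrink the first clause to $E^2T_i'$ and let the ``otherwise'' clause (i.e.\ $y$) take over on $E^3T_i'\setminus E^2T_i'$. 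Item~3 still follows from the dichotomy $Dg\cap E^2T=\emptyset$ or $Dg\subseteq E^3k$, since in the second case $z|_{E^3k}$ is now built from $y$ on $E^3\setminus E^2$, from $u$ on $E$, and from $f_i$ only on $E^2\setminus E$---exactly as in the paper.
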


\begin{proof}
	Let $f_0, f_1\in \Theta(Y, u, F)$, where the membership of $f_i$ is witnessed by $T_i\in G_F$ and $y_i\in Y$. Suppose $S_0, S_1\subseteq G$ are $F^6$-apart. Set $T = (F^5S_0\cap T_0)\cup (F^5S_1\cap T_1)$. Then $T$ is an $\mathrm{Int}(F)$-spaced set, so let $U\supseteq T$ be a maximal $\mathrm{Int}(F)$-spaced set, and let $V = (U\setminus F(S_0\cup S_1))\cup T$. So in particular, $V\cap FS_i = T_i\cap FS_i$. 
	
	We claim that $V\in G_F$. Since $V\subseteq U$, $V$ is $\mathrm{Int}(F)$-spaced. To see that $F^2V = G$, let $g\in G$. If $g\not\in F^3(S_0\cup S_1)$, then $g\in \mathrm{Int}(F)^2h$ for some $h\in U$, and since $h\not\in F(S_0\cup S_1)$, we have $h\in V$. If $g\in F^3S_i$, then $g\in F^2h$ for some $h\in (T_i\cap F^5S_i) \subseteq T\subseteq V$. 
	
	Since $Y$ is $D$-irreducible and $D\subseteq F$, let $y\in Y$ be chosen with $y|_{F^5S_i} = y_i|_{F^5S_i}$. Using $V$ and $y$, we define the $f\in \Theta(Y, u, F)$ which will satisfy $f|_{S_i} = f_i|_{S_i}$. We set $(g\cdot f)|_E = u$ for each $g\in V$, and we set $f(g) = y(g)$ whenever $g\not\in E^2V$. It remains to define $f$ on $(E^2\setminus E)g$ for $g\in V$. If $g\in V$ and $E^2g\cap S_i \neq \emptyset$, we set $f|_{E^2g} = f_i|_{E^2g}$. If $g\in V$ and $E^2g\cap (S_0\cup S_1) = \emptyset$, then we use the $D$-irreducibility of $Y$ to find any $v_g\in S_{E^3}(Y)$ with $v_g|_{E^3\setminus E^2} = (g\cdot f)|_{E^3\setminus E^2}$ and $v_g|_E = u$, and we set $(g\cdot f)(h) = v_g(h)$ for $h\in E^3$.   
	
	We verify that $f$ is as desired. Since $V\cap FS_i = T_i\cap FS_i$, we have $f|_{S_i} = f_i|_{S_i}$. To check that $f\in \Theta(Y, u, F)$, items $1$ and $2$ are witnessed by $V$ and $y$. For item $3$, let $g\in G$. Then either $Dg\cap E^2V = \emptyset$, in which case $(g\cdot f)|_E = (g\cdot y)|_E\in S_E(Y)$, or $Dg\subseteq E^3h$ for some $h\in V$, in which case $(g\cdot f)|_E = (gh^{-1})\cdot v_h|_E\in S_E(Y)$.
\end{proof}

\vspace{4 mm}

We can now undertake the main construction. We set $\lip = \lip(G, [0,1])$. So note in particular that $\lip^n = \lip(G, [0,1]^n)$, and we can treat $\lip^0$ as the trivial (singleton) $G$-flow. We will inductively construct the following objects:
\begin{itemize}
	\item 
	$D_n, E_n, F_n\subseteq G$ compact symmetric neighborhoods of the identity,
	\item 
	A subflow $Y_n\subseteq \lip^n$ so that $Y_n\times \lip\subseteq \lip^{n+1}$ is $D_n$-irreducible.
\end{itemize}

The sets $D_n, E_n, F_n\subseteq G$ will have the following properties:

\begin{enumerate}
	\item 
	$D_n^2\subseteq E_n$, $E_n^3\subseteq F_n$, $F_n^6\subseteq D_{n+1}$
	\item 
	$\bigcup_n D_n = G$
	\item 
	Suppose $K_n< \omega$ is such that $S_{D_n}(Y_n\times \lip)$ can be covered by $K_n$-many balls of radius $1/2^n$. Then there is a $D_n^2$-spaced set $\{g_{n,i}: i< K_n\}\subseteq G$ so that $D_ng_{n,i}\subseteq E_n$ for each $i< K_n$.
	\item 
	There is a $E_n^4$-spaced set $\{h_{n,i}: i< 2^n\}\subseteq G$ so that $E_n^4h_{n,i}\subseteq \mathrm{Int}(F_n)$ for each $i< 2^n$. 
\end{enumerate}

We remark that the construction of the sets $D_n, E_n, F_n\subseteq G$ requires that $G$ be non-compact, especially in regards to items 3 and 4. Start by setting $D_0 = \{g\in G: d(g, 1_G)\leq 1\}$ and $Y_0 = \lip^0$. 

Suppose we have defined $Y_0,...,Y_n$; $D_0,...,D_n$; $E_0,...,E_{n-1}$; and $F_0,...,F_{n-1}$. In particular, we have arranged so far that $Y_n\times \lip\subseteq \lip^{n+1}$ is $D_n$-irreducible. Find $E_n\subseteq G$ satisfying items $(1)$ and $(3)$. 
\vspace{2 mm}

\begin{lemma}
	\label{Lem:ConstructString}
	There is $u_n\in S_{E_n}(Y_n\times \lip)$ so that $$\{(g\cdot u_n)|_{D_n}\colon g\in D_n|E_n\}\subseteq S_{D_n}(Y_n\times \lip)$$
	is $(1/2^n)$-dense.
\end{lemma}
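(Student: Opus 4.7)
The plan is to patch together finitely many carefully chosen local patterns using the $D_n$-irreducibility of $Y_n \times \lip$ together with the spacing guaranteed by item~(3) of the construction.

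First, using the covering hypothesis that $S_{D_n}(Y_n \times \lip)$ admits a cover by $K_n$ balls of radius $1/2^n$, I select $v_0, \dots, v_{K_n-1} \in S_{D_n}(Y_n\times\lip)$ whose $1/2^n$-neighborhoods cover everything, and for each $i$ lift $v_i$ to $y_i \in Y_n\times\lip$ with $y_i|_{D_n} = v_i$. The goal is to construct $z \in Y_n\times\lip$ such that $(g_{n,i}\cdot z)|_{D_n} = v_i$ for every $i < K_n$; unwinding the definition $(g_{n,i}\cdot z)(h) = z(hg_{n,i})$, this is equivalent to insisting that $z|_{D_ng_{n,i}}$ agree with $(g_{n,i}^{-1}\cdot y_i)|_{D_ng_{n,i}}$, which is a legitimate restriction from $Y_n\times\lip$ since this subflow is $G$-invariant.

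The key point enabling this patching is that the sets $\{D_ng_{n,i} : i < K_n\}$ are pairwise $D_n$-apart: for $i\neq j$,
\[
D_n\cdot D_ng_{n,i}\,\cap\,D_n\cdot D_ng_{n,j} \;=\; D_n^2 g_{n,i}\cap D_n^2 g_{n,j} \;=\; \emptyset
\]
by the $D_n^2$-spacing supplied by item~(3); and the same calculation shows that for any $I\subseteq K_n$ and any $j\notin I$, the union $\bigcup_{i\in I} D_ng_{n,i}$ is $D_n$-apart from $D_ng_{n,j}$. I then iterate $D_n$-irreducibility: start with $z_0 := g_{n,0}^{-1}\cdot y_0\in Y_n\times\lip$, and at stage $k\geq 1$ apply $D_n$-irreducibility to the $D_n$-apart pair of sets $\bigl(\bigcup_{i<k} D_ng_{n,i},\, D_ng_{n,k}\bigr)$ with inputs $z_{k-1}$ and $g_{n,k}^{-1}\cdot y_k$ to produce $z_k\in Y_n\times\lip$. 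After $K_n-1$ steps, set $u_n := z_{K_n-1}|_{E_n}$; this is a legitimate element of $S_{E_n}(Y_n\times\lip)$.

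To finish, note that $D_ng_{n,i}\subseteq E_n$ by item~(3), so each $g_{n,i}\in D_n|E_n$. By construction, for every $h\in D_n$,
\[
(g_{n,i}\cdot u_n)(h) \;=\; u_n(hg_{n,i}) \;=\; (g_{n,i}^{-1}\cdot y_i)(hg_{n,i}) \;=\; y_i(h) \;=\; v_i(h),
\]
so $(g_{n,i}\cdot u_n)|_{D_n} = v_i$ on the nose. Since $\{v_i : i<K_n\}$ is $(1/2^n)$-dense in $S_{D_n}(Y_n\times\lip)$, the set $\{(g\cdot u_n)|_{D_n} : g\in D_n|E_n\}$ is $(1/2^n)$-dense as required. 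The only step requiring genuine verification is checking the $D_n$-apart condition at every stage of the iteration, and this is precisely why item~(3) was formulated with the strong $D_n^2$-spacing rather than mere $D_n$-spacing; beyond that, the argument is a routine book-keeping exercise in iterated irreducibility.
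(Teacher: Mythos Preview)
Your proof is correct and follows essentially the same approach as the paper's: pick a $(1/2^n)$-net $\{v_i\}$ in $S_{D_n}(Y_n\times\lip)$, use the $D_n^2$-spacing from item~(3) to see that the patches $D_ng_{n,i}$ are pairwise $D_n$-apart, and invoke $D_n$-irreducibility to glue them into a single element whose restriction to $E_n$ is $u_n$. Your write-up is in fact more careful than the paper's, which simply asserts that ``using $D_n$-irreducibility'' one finds $u_n$; you make explicit both the iteration over $i<K_n$ and the verification that $g_{n,i}\in D_n|E_n$.
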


\begin{proof}
	By assumption, $Y_n\times \lip$ is $D_n$-irreducible. Suppose $K_n< \omega$ is as in item 3, and let $\{f_i: i< K_n\}\subseteq S_{D_n}(Y_n\times \lip)$ be chosen so that every $f\in S_{D_n}(Y_n\times \lip)$ satisfies $d_{D_n}(f, f_i)< 1/2^n$ for some $i< K_n$. Let $\{g_{n,i}: i< K_n\}\subseteq G$ be as guaranteed by item 3. Then $Dg_{n,i}$ and $Dg_{n, j}$ are $D_n$-apart whenever $i\neq j < K_n$. Using $D_n$-irreducibility, we can find $u_n\in S_{E_n}(Y_n\times \lip)$ so that $(g_{n,i}\cdot u_n)|_{D_n} = f_i$ for every $i< K_n$. Then $u_n$ is as desired.
\end{proof}
\vspace{2 mm}

Now find $F_n\subseteq G$ satisfying items $(1)$ and $(4)$, and form 
$$Y_{n+1} := \Theta(Y_n\times L_1, u_n, F_n)\subseteq L_{n+1}.$$ 
Then find $D_{n+1}\subseteq G$ satisfying item $(1)$. By Proposition~\ref{Prop:Irreducible}, $Y_{n+1}\times \lip$ is $D_{n+1}$-irreducible.

We can regard each $Y_n$ as a subflow of $\lip^\omega$ by adding a tail of constant zero functions. Conversely, if $m< n\leq \omega$, we let $\pi^n_m\colon [0,1]^n\to [0,1]^m$ denote projection onto the first $m$ coordinates. So if $Z\subseteq \lip^\omega$ is a subflow, we let $\pi^\omega_n\circ Z$ denote its projection to a subflow of $\lip^n$. We now consider the space $\mathrm{Sub}(\lip^\omega)$ of subflows of $\lip^\omega$ equipped with the Vietoris topology. In this topology, we have $Z_n\to Z$ iff for each compact $D\subseteq G$ and $m< \omega$, we have $S_D(\pi^n_m\circ Z_n)\to^n S_D(\pi^\omega_m\circ Z)$ in the space $K(\lip(D, [0,1]^m))$, the space of compact subsets of $\lip(D, [0,1]^m)$ also equipped with the Vietoris topology.
\vspace{2 mm}

\begin{lemma}
	\label{Lem:ConvergeToMinimal}
	In the space $\mathrm{Sub}(\lip^\omega)$, we have $Y_n\to Z$ for some minimal flow $Z$.
\end{lemma}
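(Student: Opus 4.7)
The plan is to exploit a local monotonicity built into the construction, which makes Vietoris convergence automatic, and then to verify minimality using the density of $u_n$ together with the fact that the first $(n{+}1)$-coordinate projection of $Z$ lands inside $Y_{n+1}$. For ambient compactness: since $G$ is Polish, $\lip=\lip(G,[0,1])$ is compact metrizable (closed in $[0,1]^G$, with pointwise convergence of Lipschitz maps determined by a countable dense subset of $G$), so $\lip^\omega$ is compact metrizable and $\mathrm{Sub}(\lip^\omega)$ is compact metrizable in the Vietoris topology.

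The key monotonicity comes from item 3 of Definition~\ref{Def:Theta}: for $k\geq m$, $D\subseteq D_k$ pre-compact, and any $f\in Y_{k+1}$, one has $f|_{D_k}\in S_{D_k}(Y_k\times\lip)$, hence $\pi^{k+1}_m(f)|_D\in S_D(\pi^k_m(Y_k))$. Thus $(S_D(\pi^\omega_m(Y_k)))_k$ is eventually a decreasing sequence of compact subsets of $\lip(D,[0,1]^m)$, converging in Hausdorff distance to $C_{D,m}:=\bigcap_{k\geq m} S_D(\pi^\omega_m(Y_k))$. Subflows of $\lip^\omega$ are characterized by their families of local traces, so there is a unique $Z\in\mathrm{Sub}(\lip^\omega)$ with $S_D(\pi^\omega_m(Z))=C_{D,m}$ for all $D,m$, and $Y_n\to Z$. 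In particular $\pi^\omega_m(Z)=\bigcap_{k\geq m}\pi^k_m(Y_k)$, which yields the crucial containment $\pi^\omega_{n+1}(Z)\subseteq Y_{n+1}$ for every $n$.

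To verify $Z$ is minimal, I would use Proposition~\ref{Prop:MinimalGDelta}: after reducing to $(D,\epsilon)$-minimality of $\pi^\omega_m(Z)$ for each $m$ (the topology on $\lip^\omega$ factors through finite-coordinate projections), choose $n\geq m$ with $D\subseteq D_n$ and $1/2^n<\epsilon$. For any $z\in Z$, since $\pi^\omega_{n+1}(z)\in Y_{n+1}$, Definition~\ref{Def:Theta}(1) supplies $T\in G_{F_n}$ with $(t\cdot \pi^\omega_{n+1}(z))|_{E_n}=u_n$ for each $t\in T$; as $F_n^2T=G$, there is $t^*\in T\cap F_n^2$. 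Given $v\in S_D(\pi^\omega_m(Z))\subseteq S_D(\pi^n_m(Y_n))$ with some extension $\hat v\in S_{D_n}(\pi^n_m(Y_n))$, Lemma~\ref{Lem:ConstructString} (projected) produces $g\in D_n|E_n$ with $d_{D_n}\bigl(\pi^{n+1}_m((g\cdot u_n)|_{D_n}),\,\hat v\bigr)<1/2^n$. Because $g\in D_n|E_n$ means $D_ng\subseteq E_n$, the translation $gt^*\in G$ satisfies $\pi^\omega_m((gt^*)\cdot z)|_D=\pi^{n+1}_m((g\cdot u_n)|_D)$, which is within $\epsilon$ of $v$. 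This gives $(D,\epsilon)$-minimality and so, by Proposition~\ref{Prop:MinimalGDelta}, minimality of $Z$.

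The main obstacle is the clean intertwining of the $G$-action with the two projections $\pi^\omega_{n+1}$ and $\pi^{n+1}_m$ in the last step: one must carefully verify that a single translate $gt^*$ of $z$ captures the chosen $v$ despite $z$ living in $\lip^\omega$ rather than the finite product $\lip^{n+1}$, a subtlety that the inclusion $\pi^\omega_{n+1}(Z)\subseteq Y_{n+1}$ obtained from the monotonicity is tailor-made to resolve. A secondary point of care is arguing that a subflow is determined by its local traces, which is needed to both define $Z$ and identify its projections with the respective intersections of $\pi^k_m(Y_k)$.
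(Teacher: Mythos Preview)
Your approach is essentially the paper's: monotonicity of local traces from item~3 of Definition~\ref{Def:Theta} for convergence, then the $(1/2^n)$-density of $u_n$ from Lemma~\ref{Lem:ConstructString} for minimality. However, there is a genuine gap in the passage from local to global.

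Item~3 of Definition~\ref{Def:Theta} gives only that $S_D(\pi^{k+1}_m(Y_{k+1}))\subseteq S_D(\pi^k_m(Y_k))$ for $D\subseteq D_k$; it does \emph{not} give the global inclusion $\pi^{k+1}_m(Y_{k+1})\subseteq \pi^k_m(Y_k)$, because membership in $Y_k=\Theta(Y_{k-1}\times\lip,u_{k-1},F_{k-1})$ involves the global conditions~(1) and~(2) (existence of a single $T\in G_{F_{k-1}}$ and a single $y\in Y_{k-1}\times\lip$), not just the windowed condition~(3). An element of $\pi^{k+1}_k(Y_{k+1})$ is a patchwork of $Y_k$-pieces, and there is no reason the various local $F_{k-1}$-spaced sets should assemble into one global $T'\in G_{F_{k-1}}$. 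Hence your equality $\pi^\omega_m(Z)=\bigcap_{k\ge m}\pi^k_m(Y_k)$ and the derived containment $\pi^\omega_{n+1}(Z)\subseteq Y_{n+1}$ are unjustified (and likely false).

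Fortunately your argument only needs the local fact, and that is exactly what the paper uses. From the decreasing sequence $S(n{+}1,m,j)$ for $j\ge n{+}1$ one gets $(\pi^\omega_m z)|_{D_{n+1}}\in S_{D_{n+1}}(\pi^{n+1}_m(Y_{n+1}))$; lifting, $(\pi^\omega_{n+1}z)|_{D_{n+1}}=w|_{D_{n+1}}$ for some $w\in Y_{n+1}$. Now $w$ has its $T\in G_{F_n}$, and since $F_n^2T=G$ there is $t^*\in T\cap F_n^2$, whence $E_nt^*\subseteq F_n^3\subseteq D_{n+1}$. Thus the copy of $u_n$ you need already sits inside the window $D_{n+1}$, and the rest of your computation with $gt^*$ goes through verbatim. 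Rewriting the middle paragraph with this local containment (rather than the global one) closes the gap and aligns your proof with the paper's.
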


\begin{proof}
	For notation, set $S(k, m, n) := S_{D_k}(\pi^n_m\circ Y_n)$. Notice that when $k\leq n$, we have that $S(k, m, n+1)\subseteq S(k, m, n)$ by item $3$ in Definition~\ref{Def:Theta}. So it follows that $S(k, m, n)\to^n S(k, m)$ for some compact $S(k, m)\subseteq \lip(D_k, [0,1]^m)$. Also notice that $\pi^{m+1}_m\circ S(k, m+1) = S(k, m)$. Furthermore, if $\ell \geq k$ and $m< \omega$, let 
	$$\rho^l_k\colon \lip(D_\ell, [0,1]^m)\to \lip(D_k, [0,1]^m)$$
	denote the restriction map. We note that whenever $k \leq \ell \leq n$, we have $\rho^l_k[S(\ell, m, n)] = S(k, m, n)$, so also $\rho^l_k[S(\ell, m)] = S(k, m)$. It follows that 
	$$Z := \varprojlim_k \varprojlim_m S(k, m)\subseteq \lip^\omega$$
	satisfies $Y_n\to Z$ in $K(\lip^\omega)$. Since $\mathrm{Sub}(\lip^\omega)$ is a closed subspace of $K(\lip^\omega)$, it follows that $Z\in \mathrm{Sub}(\lip^\omega)$. 
	
	To see that $Z$ is minimal, it suffices to argue that $\pi^\omega_m\circ Z$ is minimal for each $m< \omega$. To do this, we use Proposition~\ref{Prop:MinimalGDelta}. Fix $z\in \pi^\omega_m\circ Z$ and $k< \omega$. We will argue that $\{(g\cdot z)|_{D_k}: g\in G\}$ is dense in $S(k, m)$, thus handling all $\epsilon > 0$ simultaneously.
	
	 Let $n\geq k$. The construction of $Y_{n+1}$ yields (keeping in mind that $F_n^6\subseteq D_{n+1}$) that for any $v\in S(n+1, m, n+1)$, we have that $\{(g\cdot v)|_{D_n}: g\in D_n|D_{n+1}\}\subseteq S(n, m, n)$ is $(1/2^n)$-dense. Since $z|_{D_{n+1}}\in S(n+1, m)$, we have that $\{(g\cdot z)|_{D_n}: g\in D_n|D_{n+1}\}\subseteq S(n, m)$ is $(1/2^n)$-dense. By restricting to $D_k$, we see that $\{(g\cdot z)|_{D_k}: g\in D_n|D_{n+1}\}\subseteq S(k, m)$ is also $(1/2^n)$-dense. Letting $n$ grow, we see that $\{(g\cdot z)|_{D_k}: g\in G\}\subseteq S(k, m)$ is dense as desired. 
\end{proof}
\vspace{2 mm}

Set $X = \lip^\omega\times \prod_{n} G_{F_n}$. Weakly model-universal flows are closed under products, and if any member of the product is model-universal, then the product is as well (\cite{Z}, Proposition 6). So $X$ is model-universal. We will often write elements of $X$ as tuples $((f_n)_n, (S_n)_n))$, where $f_n\in \lip$ and $S_n\in G_{F_n}$. We will find a Borel $G$-invariant set $W\subseteq X$ with the following two properties:
\begin{enumerate}
	\item 
	For every measure $\mu\in P_G(X)$, $\mu(W) = 1$.
	\item 
	There is a Borel, $G$-equivariant injection $\phi\colon W\to Z$.
\end{enumerate}  
This will show that $Z$ is model-universal. We set
$$W = \{((f_n)_n, (S_n)_n): \forall k< \omega \,\, \exists m< \omega\,\, \forall n\geq m\,\, (E_k\cap E_n^3S_n = \emptyset)\}.$$
In particular, membership in $W$ only depends on the sets $S_n$. Fix $\mu\in P_G(X)$; we will show that $\mu(W) = 1$. Fix $n< \omega$, and let $\nu\in P_G(G_{F_n})$ be the projection of $\mu$ onto $G_{F_n}$. 
\vspace{2 mm}

\begin{lemma}
	\label{Lem:BorelCantelli}
If $k\leq n$, we have
$$\nu(\{S\in G_{F_n}: E_k\cap E_n^3S \neq \emptyset\}) \leq 1/2^n$$
\end{lemma}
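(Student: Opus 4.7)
The strategy is a pigeonhole / mass-transport style argument that exploits the $2^n$ disjoint translates guaranteed by item $(4)$. First, I would recast the event as a hitting event: since $E_n^3$ is symmetric, $E_k \cap E_n^3 S \neq \emptyset$ is equivalent to $S \cap E_n^3 E_k \neq \emptyset$. Writing $B := E_n^3 E_k$ and $A := \{S \in G_{F_n} : S \cap B \neq \emptyset\}$, the goal becomes $\nu(A) \leq 1/2^n$.

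Next, I would use the $E_n^4$-spaced set $\{h_{n,i} : i < 2^n\}$ from item $(4)$ to produce $2^n$ equimeasurable translates of $A$. Define $A_i := \{S : S \cap B h_{n,i} \neq \emptyset\}$ for $i < 2^n$. A direct computation using $G$-invariance of $\nu$ (with action $g \boldsymbol{\cdot} S = S g^{-1}$) shows that $\nu(\{S : S \cap B \neq \emptyset\}) = \nu(\{S : S \cap B h^{-1} \neq \emptyset\})$ for any $h \in G$, so taking $h = h_{n,i}^{-1}$ yields $\nu(A_i) = \nu(A)$ for each $i$.

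The heart of the argument is the pointwise inequality $\sum_{i < 2^n} \mathbf{1}_{A_i}(S) \leq 1$ for every $S \in G_{F_n}$. Three facts combine to give this. First, since $k \leq n$, chasing the nesting $E_k \subseteq E_k^3 \subseteq F_k \subseteq F_k^6 \subseteq D_{k+1} \subseteq \cdots \subseteq D_n \subseteq D_n^2 \subseteq E_n$ shows $E_k \subseteq E_n$, so $B h_{n,i} \subseteq E_n^4 h_{n,i} \subseteq \mathrm{Int}(F_n)$ by item $(4)$. Second, the sets $E_n^4 h_{n,i}$ are pairwise disjoint by $E_n^4$-spacing, hence so are the smaller sets $B h_{n,i}$. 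Third, $|S \cap \mathrm{Int}(F_n)| \leq 1$ for any $S \in G_{F_n}$: otherwise two distinct points $s_1, s_2 \in S \cap \mathrm{Int}(F_n)$ would yield $s_1 s_2^{-1} \in \mathrm{Int}(F_n) \cdot \mathrm{Int}(F_n)^{-1} = \mathrm{Int}(F_n)^2$ (using that $\mathrm{Int}(F_n)$ is symmetric and contains $1_G$), contradicting the $\mathrm{Int}(F_n)$-spacing of $S$. Putting these together, $\sum_i \mathbf{1}_{A_i}(S) \leq \bigl| S \cap \bigcup_i B h_{n,i} \bigr| \leq |S \cap \mathrm{Int}(F_n)| \leq 1$.

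Integrating this inequality against $\nu$ yields $2^n \nu(A) = \sum_{i < 2^n} \nu(A_i) \leq 1$, giving the claim. The main obstacle is conceptual rather than technical: one must recognize that item $(4)$ of the inductive construction was tailored precisely to power this $2^{-n}$ counting estimate, which in turn feeds a Borel--Cantelli argument to show $\mu(W) = 1$. The individual set inclusions are routine once one has the strategy in hand.
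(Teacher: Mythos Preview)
Your proof is correct and follows essentially the same route as the paper: both translate the event by the $h_{n,i}$, use $E_k\subseteq E_n$ to get $E_n^3E_kh_{n,i}\subseteq E_n^4h_{n,i}\subseteq \mathrm{Int}(F_n)$, and then exploit $\mathrm{Int}(F_n)$-spacing to see that these $2^n$ translates are pairwise disjoint. Your packaging via the pointwise bound $|S\cap \mathrm{Int}(F_n)|\leq 1$ and integration is a mild rephrasing of the paper's direct disjointness check (the paper observes $1_G\in \mathrm{Int}(F_n)g_i\cap \mathrm{Int}(F_n)g_j$ for $g_i, g_j\in \mathrm{Int}(F_n)$), but the content is identical.
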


\begin{proof}
	Recall that by item $(4)$ of the properties of the sets $D_n, E_n, F_n$, there is an $E_n^4$-spaced set $\{h_{n,i}: i< 2^n\}\subseteq G$ so that $E_n^4h_{n,i}\subseteq \mathrm{Int}(F_n)$ for each $i< 2^n$. Note that we have:
	\begin{align*}
	h_{n,i}^{-1}\boldsymbol{\cdot} \{S\in G_{F_n}: E_k\cap E^3_nS\neq \emptyset\} &= \{Sh_{n,i}\in G_{F_n}: E_k\cap E_n^3S\neq \emptyset\} \\
	&= \{T\in G_{F_n}: E_n^3E_kh_{n,i}\cap T\neq \emptyset\}\\
	&\subseteq \{T\in G_{F_n}: E_n^4h_{n,i}\cap T\neq \emptyset\}.
	\end{align*}
	Note that if $g_i\in E_n^4h_{n,i}\subseteq \mathrm{Int}(F_n)$ and $g_j\in E_n^4h_{n,j}\subseteq \mathrm{Int}(F_n)$, then we have 
	$$1_G\in \mathrm{Int}(F_n)g_i\cap \mathrm{Int}(F_n)g_j\neq \emptyset.$$
	It follows that the collection
	$$\{h_{n,i}^{-1}\boldsymbol{\cdot} \{S\in G_{F_n}: E_k\cap E_n^3S\}: i< 2^n\}$$
	is pairwise disjoint. Since $\nu$ is $G$-invariant, we are done.
\end{proof}
\vspace{2 mm}

We can now apply the Borel-Cantelli lemma to conclude that $\mu(W) = 1$.

We now turn to defining $\phi\colon W\to Z$. First let $j\colon \omega\to \omega\times 2$ be an infinite-to-one surjection. We define for each $n< \omega$ a Borel $G$-equivariant map $\phi_n\colon W\to Y_n$ so that $\phi(w) = \lim_n \phi_n(w)$. Start by letting $\phi_0$ denote the only map to $Y_0$. Suppose $\phi_n$ is defined. Let $w = ((f_n)_n, (S_n)_n)\in W$ be given. For notation, we set $f_{(n, 0)} = f_n$ and $f_{(n, 1)} = \iota(S_n)$. We set 
$$\theta_n := \theta\langle Y_n\times \lip, u_n, F_n\rangle$$
i.e.\ $\theta_n\colon (Y_n\times \lip)\times G_{F_n}\to Y_{n+1}$ denotes the map defined in the proof of Proposition~\ref{Prop:Uniformize}. We set $$\phi_{n+1}(w) = \theta_n((\phi_n(w), f_{j(n)}), S_n).$$
\vspace{2 mm}

\begin{lemma}
	\label{Lem:PWConvergence}
	For every $w\in W$, the sequence $\phi_n(w)$ is convergent. 
\end{lemma}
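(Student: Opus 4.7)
The plan is to establish coordinate-by-coordinate pointwise convergence: for every $g \in G$ and every coordinate $\ell < \omega$, I aim to show that the sequence $(\phi_n(w)_\ell(g))_{n > \ell}$ is eventually constant. Since $\lip^\omega$ carries the product of the pointwise-convergence topologies on each $\lip$-factor, this suffices to give convergence of $\phi_n(w)$ in $\lip^\omega$.

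The key step is to unpack the recursion $\phi_{n+1}(w) = \theta_n\bigl((\phi_n(w), f_{j(n)}), S_n\bigr)$ using the explicit formula for $\theta_n = \theta\langle Y_n \times \lip, u_n, F_n\rangle$ from the proof of Proposition~\ref{Prop:Uniformize}. That formula declares $\theta_n((y, f), S)(g) = (y, f)(g)$ whenever $g \notin E_n^2 S$. Consequently, whenever $g \notin E_n^2 S_n$, all of the first $n$ coordinates of $\phi_{n+1}(w)$ at $g$ agree with the corresponding coordinates of $\phi_n(w)$ at $g$; in particular, for every $\ell < n$ the value $\phi_n(w)_\ell(g)$ is unchanged in passing from $n$ to $n+1$.

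Now fix $g \in G$ and $\ell < \omega$. Since $\bigcup_n D_n = G$ and $D_k \subseteq E_k$, choose $k$ with $g \in E_k$. The defining property of $W$ then produces $m_0$ such that $E_k \cap E_n^3 S_n = \emptyset$ for every $n \geq m_0$; combined with $E_n^2 \subseteq E_n^3$ (since $1_G \in E_n$), this forces $g \notin E_n^2 S_n$ for every $n \geq m_0$. Applying the observation above at all $n \geq \max(m_0, \ell + 1)$ yields $\phi_{n+1}(w)_\ell(g) = \phi_n(w)_\ell(g)$, so the sequence is eventually constant and hence convergent. I anticipate no real obstacle here: the lemma is essentially the formal justification for the definition of $W$, and the constants were designed so that $E_n^2 S_n$ is the exact set of points where $\theta_n$ perturbs its input.
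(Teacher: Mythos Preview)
Your proposal is correct and follows essentially the same argument as the paper: both fix a coordinate index and a point $g\in G$, locate $g$ inside some $E_k$, invoke the definition of $W$ to ensure $g\notin E_n^3S_n$ (hence $g\notin E_n^2S_n$) for all large $n$, and then use the explicit formula for $\theta_n$ to conclude that the relevant coordinate of $\phi_{n+1}(w)$ at $g$ is unchanged from $\phi_n(w)$. The paper phrases the last step as $\phi_{n+1}(w)|_{E_k}=(\phi_n(w),f_{j(n)})|_{E_k}$, which is exactly your observation specialized to all of $E_k$ at once.
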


\begin{proof}
	Fix $w := ((f_n)_n, (S_n)_n)\in W$, and write $\phi_n(w) = (\alpha_{n,m})_{m< \omega}$, with $\alpha_{n,m}\in \lip$. When $m> n$, we set $\alpha_{n,m}$ to the constant zero function. Fix $m< \omega$ and $g\in G$. We will show that as $n\to \infty$, eventually $\alpha_{n,m}(g)$ is constant. Suppose $k< \omega$ is such that $g\in E_k$. Since $w\in W$, we eventually have that $E_n^3S_n\cap E_k = \emptyset$. This implies that $\phi_{n+1}(w)|_{E_k} = (\phi_n(w), f_{j(n)})|_{E_k}$, so in particular $\alpha_{n,m}(g) = \alpha_{n+1, m}(g)$ whenever $n$ is suitably large.
\end{proof}
\vspace{2 mm}

We can now define $\phi\colon W\to Z$ by setting
$$\phi(w) = \lim_n \phi_n(w)$$
Then $\phi$ is Borel and $G$-equivariant. To see that $\phi$ is injective, suppose $w, w'\in W$, where $w = ((f_n)_n, (S_n)_n)$ and $w' = ((f'_n)_n, (S'_n)_n)$. Find $n< \omega$ and $i< 2$ with $f_{(n,i)}\neq f'_{(n,i)}$. In particular, there is some $k< \omega$ with $f_{(n,i)}|_{E_k}\neq f'_{(n,i)}|_{E_k}$. Because $j\colon \omega\to \omega\times 2$ is an infinite-to-one surjection, we can find $N<\omega$ with $j(N) = (n,i)$ which is as large as desired, in particular large enough so that $E_N^3S_N\cap E_k = \emptyset$ and $E_N^3S'_N\cap E_k = \emptyset$. Writing $\phi(w) = (\alpha_n)_n$ and $\phi(w') = (\alpha'_n)_n$, we have that $\alpha_N|_{E_k} = f_{j(N)}|_{E_k}$ and $\alpha'_N|_{E_k} = f'_{j(N)}|_{E_k}$. Hence $\phi(w)\neq \phi(w')$ as desired.

This concludes the proof that $Z$ is a minimal, model-universal $G$-flow.

\vspace{5 mm}

\noindent
Colin Jahel

\noindent
Universit\'e Claude Bernard - Lyon 1

\noindent
colin.jahel@math.univ-lyon1.fr
\vspace{3 mm}

\noindent
Andy Zucker

\noindent
Universit\'e Claude Bernard - Lyon 1

\noindent
zucker@math.univ-lyon1.fr

\end{document}